\documentclass[12pt,a4paper]{amsart}
%%% Start of the area for technical editor.
%\newcommand{\publname}{Commun.~Korean Math.~Soc.}
%\issueinfo{}% volume number
  {}%        % issue number
  {}%        % month
  {}%     % year
\pagespan{1}{}
%\received{Received January 5, 2005}
%\received{Received August 25, 2005;\enspace Revised October 20, 2005}
%\copyrightinfo{}%              % copyright year
 % {The Korean Mathematical Society}% copyright holder
%%% End of the area for technical editor.

\theoremstyle{plain}
\newtheorem{theorem}{Theorem}[section]
\newtheorem{proposition}[theorem]{Proposition}
\newtheorem{lemma}[theorem]{Lemma}
\newtheorem{corollary}[theorem]{Corollary}
\theoremstyle{definition}
\newtheorem*{definition}{Definition}
\theoremstyle{example}
\newtheorem{example}[theorem]{Example}
% MATH -------------------------------------------------------------------

 \newcommand{\A}{\mathcal{A}}
 \newcommand{\p}{P_n}
\newcommand{\ph}{\hat{P}_n}
\newcommand{\nm}{\Bbb{N}}
 \newcommand{\U}{\mathcal{U}}
\newcommand{\X}{\mathcal{X}}

\begin{document}

\title[Contractibility of ultrapower of Fr\'echet
algebras ]
 {Contractibility of ultrapower of Fr\'echet
algebras }

\author{E. Feizi and J. Soleymani}
 \address{ Mathematics Department, Bu-Ali Sina University, 65174-4161, Hamadan,
 Iran}\email{efeizi@basu.ac.ir; j.soleymani@basu.ac.ir}

 %\subjclass[2000]{Primary 54A40;
%Secondary 47H10}
\subjclass[2000]{Primary 46M07, 46H05, 46A04; Secondary 46A13,
46A32.} \keywords{ultrapower, Fr\'echet algebra, locally bounded
approximate identity, locally bounded approximate diagonal,
 locally virtual diagonal, approximate diagonal, approximation property.}

\begin{abstract}
The aim of this article is to study a number of relationships
between Fr\'echet algebra $\A$ and its ultrapower $({\A})_{\U}$.
We give a characterization in some aspects such as locally bounded
approximate identity. We consider the notion of contractibility
of ultrapower of Fr\'echet algebra, and  we show that if
$({\A})_{\U}$ has approximation property with good ultrafilter
${\U}$ then $\A$ is contractible if and only if $({\A})_{\U}$ is
contractible.
\end{abstract}
\maketitle

\section*{{Introduction}}

A Banach algebra $\A$ is said to be contractible if for every
Banach $\A$-bimodule $\X$ each continuous derivation from $\A$
into $\X$ is inner.
 Contractibility of Banach algebras has been described  in term of  diagonal. In this manner a Banach algebra is contractible if
and only if it has a  diagonal. For more details see \cite{Run}.
%%%%%%%%%%%%%%%%%%%%%%%%%%%%%%%%%%%%%%%%%%%%%%%%%%%%%%%%%%%%%%%%%%%%%%%%%%%%%%%%%%%%%

The theory of amenability and inclusions has recently been
developed in the direction of problems on nonnormed algebras.
Fr\'{e}chet  algebras, i.e, metrizable locally convex complete
topological algebras, are of particular interest. There are
essential differences between topologies of Banach and Fr\'echet
algebras. For example, every bounded subset of  Fr\'echet
nonnormed algebras has empty interior.

%%%%%%%%%%%%%%%%%%%%%%%%%%%%%%%%%%%%%%%%%%%%%%%%%%%%%%%%%%%%%%%%%%%%%%%%%%%%%%%%%%%%%%%%%%%%%%%%%%%%%%%%%
 Ultrapower of locally convex spaces and Banach spaces was introduced by
  Heinrich in \cite{Hen1},
  \cite{Hen2}, and \cite{Hen3}. Daws in \cite{Daw}
  discussed the relation between amenability of a Banach algebra and its ultrapower.

In this paper, we study the construction of  the ultrapower of
Fr\'echet algebras and  consider the contractibility of such
structures. For this purpose, we first show that for a Fr\'echet
algebra $\A$, and a countable incomplete ultrafilter $\U$, its
ultrapower $(\A)_{\U}$ is Fr\'echet algebra. Then, we define
approximation property for locally convex spaces and locally
diagonal for Fr\'echet algebras and  prove that for a good
ultrafilter $\U$, if $(\A)_{\U}$ has approximation property then
$\A$ has (locally) diagonal if and only if $(\A)_{\U}$ has
(locally) diagonal. The conclusion is that if $({\A})_{\U}$ has
approximation property with a good ultrafilter ${\U}$ then $\A$ is
contractible if and only if $({\A})_{\U}$ is contractible.

Before giving the proof of the above concepts, we define
 locally bounded approximate identity and show that a Fr\'echet algebra $\A$ is unital if and only
 if $(\A)_{\U}$ is so. Finally this applies to bounded approximate identity and locally bounded approximate identity.

\section{{Preliminaries}}
In this section some notations and preliminaries on Fr\'echet
algebras and ultrafilters will be setup. We generally follow
\cite{Bir, Kes, Dal, Mal} for more details.

%\subsection{Banach and Fr\'echet Algebras Structures}
 A complete topological algebra $\A$ whose topology is given by
an increasing countable family of sub-multiplicative semi-norms
$(P_n)$ is called a Fr\'echet algebra and is denoted by
$({\A},(P_n))$. By \cite[Corollary 3.2]{Mal} every Fr\'echet
algebra within a topological algebraic isomorphism is the
projective limit of a decreasing sequence of Banach algebras
which, for Fr\'echet algebra $({\A},(P_n))$ is denoted by
$\A=\displaystyle{\lim_{\longleftarrow}} {\A}_n $ where for $n\in
\Bbb{N}$, ${\A}_n$ is the completion of quotient normed  space
$\frac{\A}{P^{-1}_n\{0\}}$.

Let $\A$ be a Fr\'echet algebra. Then a Fr\'echet space $X$ is a
Fr\'echet $\A$-bimodule if  both sides module actions of $\A$ on
$X$ are continuous. For example, $\A$ itself is a Fr\'echet
$\A$-bimodule. Furthermore $X^*$, the dual space of a Fr\'echet
$A$-bimodule $X$ with the strong topology, is a locally convex
$\A$-bimodule with respect to the module operations defined by
\begin{eqnarray*}
\langle a\cdot f, x\rangle=\langle f, x\cdot a\rangle,
\quad\quad\: \langle f\cdot a, x\rangle=\langle f, a\cdot
x\rangle \quad\quad (x\in X),
\end{eqnarray*}
where $a\in \A$ and $f\in X^*$. In this case we say that $X^*$ is
the dual module of $X$.

%Let $\A$ be an algebra and $X$ be an $\A$-bimodule. A
% linear map $D:\A\rightarrow X$ is called a derivation if
%$$D(ab)=D(a)\cdot b+a\cdot D(b)\quad \quad (a,b \in {\A}).$$
%For example, $\delta_x:a\rightarrow a\cdot x-x\cdot a$ is a
%derivation; derivation of this form is called inner derivation.
%\begin{definition}

%\end{definition}
A filter on a set $I$ is a subset $F$ of $P(I)$ such that: (i)
empty set is not in $F$; (ii) $F$  is closed under finite
intersection; (iii) $F$ is upper set. By Zorn's Lemma, the maximal
filters exist and are called ultrafilters. If $x \in X$ then the
collection $U_x = \{A \subseteq X : x \in A\}$ is an ultrafilter.
 This ultrafilter being the principal ultrafilter at x. Ultrafilters
which are not principal are called non-principal. We want to
study the concept of a limit along a filter. Let $F$ be a filter
on a set $I$ and $(x_i)_{i\in I}$ be a family in a topological
space. We write $x = \lim_{i\in F} x_i$ if for each open
neighborhood $U$ of $x$, then  $\{i \in I : x_i \in U\}\in F$. We
call an ultrafilter $\U$ countably incomplete when there exists a
sequence $(U_n)_{n\in \Bbb{N}}$ in $\U$ such that $U_1\supseteq
U_2\supseteq U_3\cdots$ and  $\bigcap_{n\in
\Bbb{N}}U_n=\emptyset$. In this paper, it is supposed that $\U$ is
countably incomplete ultrafilter.(For more details, see
\cite[1.3]{3} and \cite {Sim}).
%%%%%%%%%%%%%%%%%%%%%%%%%%%%%%%%%%%%%%%%%%%%%%%%%%%%%%%%%%%%%%%%%%%%%%%%%%%%%%%%%%%%%%%%%%%%%%%%%%%%%%%%
\begin{proposition}\cite[Proposition 1.3.4]{3}\label{fil1}
 Let $X$ be a compact topological space, let $\U$
be an ultrafilter on a set $I$, and let $(x_i)_{i\in I}$ be a
family in $X$. Then there exists $x \in X$ with $x =\lim _{i\in
\U} x_i$. Furthermore, if X is Hausdorff, then $x$ is unique.
\end{proposition}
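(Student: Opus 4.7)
The plan is to prove existence via compactness and the finite intersection property, then handle uniqueness with the standard Hausdorff separation argument. The ingredients are elementary, but the key subtlety lies in using maximality of $\U$ (not merely the filter axioms) to upgrade a cluster point to a genuine ultrafilter limit.

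First I would introduce the family of closed sets
\[
\mathcal{F}=\bigl\{\overline{\{x_i : i\in U\}} : U\in\U\bigr\}
\]
in $X$. Because $\U$ is closed under finite intersection and contains no empty set, $\mathcal{F}$ has the finite intersection property: for $U_1,\dots,U_n\in\U$ the intersection $U_1\cap\cdots\cap U_n$ lies in $\U$, hence is nonempty, so picking any $i$ in it places $x_i$ inside each $\overline{\{x_j : j\in U_k\}}$. Compactness of $X$ then yields a point $x\in\bigcap_{U\in\U}\overline{\{x_i : i\in U\}}$.

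Next I would verify that $x=\lim_{i\in\U} x_i$. Fix an open neighborhood $V$ of $x$ and set $A=\{i\in I : x_i\in V\}$. If $A\notin\U$, then by the defining property of an ultrafilter the complement $I\setminus A=\{i : x_i\notin V\}$ lies in $\U$, so $x$ would belong to $\overline{\{x_i : i\in I\setminus A\}}\subseteq X\setminus V$, contradicting $x\in V$. Hence $A\in\U$, proving the limit assertion. This is the place where the full strength of ultrafilter maximality is used; with an arbitrary filter one only obtains a cluster point.

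For uniqueness under the Hausdorff hypothesis, I would argue by contradiction: if $x\ne y$ were both limits, choose disjoint open neighborhoods $V_x,V_y$ of $x,y$. Then $\{i : x_i\in V_x\}$ and $\{i : x_i\in V_y\}$ both lie in $\U$, hence so does their intersection, which is empty because $V_x\cap V_y=\emptyset$. This contradicts $\emptyset\notin\U$. The main conceptual obstacle is purely the existence step (really just choosing the right family to invoke compactness on and remembering that maximality of $\U$ is needed to rule out the complement); the rest is routine.
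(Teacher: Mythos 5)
Your argument is correct and complete: the finite intersection property of the closed sets $\overline{\{x_i : i\in U\}}$ ($U\in\U$) gives a cluster point, maximality of $\U$ upgrades it to an ultrafilter limit, and the Hausdorff separation argument gives uniqueness. The paper itself supplies no proof of this proposition --- it is quoted from Daws's thesis \cite[Proposition 1.3.4]{3} --- so there is nothing internal to compare against; your proof is the standard one found in the cited sources.
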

Note that the converse of the above Proposition is also true.

The rest of this section is devoted to the introducing of a new
kind of ultrafilter, which is called "good ultrafilter".
%%%%%%%%%%%%%%%%%%%%%%%%%%%%%%%%%%%%%%%%%%%%%%%%%%%%%%%%%%%
Keisler in 1964,  introduced the notion of an $\aleph^+$-good
ultrafilter in the field of sets (in particular, on
$\mathcal{P}(\aleph)$). He also showed the existence of
$\aleph^+$-good countably incomplete ultrafilters on
$\mathcal{P}(\aleph)$, where $\aleph$ is a cardinal and
$\aleph^+=2^{\aleph}$.

% Recall that an ultrafilter $\U$ is considered
%to be countably incomplete if and only if $\U$ is not closed
%under countable intersection.

%%%%%%%%%%%%%%%%%%%%%%%%%%%%%%%%%%%%%%%%%%%%%%%%%%%%%%%%%%%%%%%%
 Before
we give the definition of good ultrafilter, we need some notations
on functions. Let $I$ be a nonempty set, $P_f(I)$ the family of
finite subsets of $I$, and $\beta$ a cardinal. Suppose that $f,g$
are functions from the set $P_f(\beta)$ into the power set $P(I)$.
We say that $g\leq f$ whenever for all $u\in P_f(\beta)$,
$g(u)\subset f(u)$.
%%%%%%%%%%%%%%%%%%%%%%%%%%%%%%%%%%%%%%%%%%%%%%%%%%%%%%%%%%%%%%%%%%%%%%%%%%%%%%%
 Also, $f$ is monotonic whenever for all $u,w\in P_f(\beta)$
with $u\subset w$, $g(u)\supset g(w)$.
 The function $g$ is said to be additive whenever
  $$g(u\cup w)=g(u)\cap g(w), \quad\quad u,w\in
P_f(\beta).$$
%%%%%%%%%%%%%%%%%%%%%%%%%%%%%%%%%%%%%%%%%%%%%%%%%%%%%%%%%%%%%%%%%%%%%%%%%%%%%%%%%%%%%%%%%%%%%%%%%%%%%%%%%%%%%%%%%%%%%%%%%%
By  \cite[Lemma 6.1.3]{Kes} every additive function is monotonic.
%%%%%%%%%%%%%%%%%%%%%%%%%%%%%%%%%%%%%%%%%%%%%%%%%%%%%%%%%%%%%%%%%%%%%%%%%%%%%%%%%%%%%%%%%%%%%%%%%%%%%%%%%%%%%%%%%%%%%%%%%%%%%%%%%%%%%%%%%%%%%%%%%%%%%%%%%%%%%%%%%%%%%%%%%%%%%%%%%%%%%%%%%
\begin{definition}
Let $\aleph$ be an infinite cardinal. An ultrafilter $\U$ on a set
$I$ is called $\aleph$-good ultrafilter
 if for each cardinal $\beta\le \aleph$ and every monotonic function $f$ from $P_f(\beta)$
 into $\U$ then  there exists an additive function $g$ from $P_f(\beta)$
 into $\U$ such that $g\leq f$.
\end{definition}

%%%%%%%%%%%%%%%%%%%%%%%%%%%%%%%%%%%%%%%%%%%%%%%%%%%%%%
The next result guarantees that there are sufficiently many good
ultrafilters.
%%%%%%%%%%%%%%%%%%%%%%%%%%%%%%%%%%%%%%%%%%%%%%%%%%%%%%%%%%%%%%%%%%%%%%%%%%%%%%%%%%%%%%%%%%%%%%%%%%%%%%%%%%%%%%%%%%%%%%

\begin{proposition}\cite[Theorem 6.1.4]{Kes}
Let $\aleph$ be an infinite cardinal. On each set of cardinality
not smaller than $\aleph$ there exists an $\aleph^+$-good
countably incomplete ultrafilter.
\end{proposition}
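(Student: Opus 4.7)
The plan is to follow Keisler's classical transfinite recursion of length $\aleph^+$, building the filter so that every monotonic function that happens to land in the filter automatically acquires an additive refinement, also in the filter. Without loss of generality assume $|I|=\aleph$ (the general case follows by restricting to a subset of that size and then pushing the ultrafilter forward). First I fix a decreasing sequence $I_1\supseteq I_2\supseteq\cdots$ of subsets of $I$ with $\bigcap_n I_n=\emptyset$; keeping every $I_n$ in the filter at every stage of the recursion will guarantee that the final ultrafilter is countably incomplete.

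The combinatorial engine will be an independent family $\{A_\xi\}_{\xi<2^{\aleph}}$ of subsets of $I$, provided by the Fichtenholz--Kantorovich theorem: every finite Boolean combination of distinct members is nonempty. I then enumerate in order type $\aleph^+=2^\aleph$ all pairs $(\beta,f)$ with $\beta\leq\aleph$ and $f\colon P_f(\beta)\to P(I)$ monotonic (the total count is $\leq 2^{\aleph\cdot\aleph}=2^\aleph$), as a sequence $(f_\alpha)_{\alpha<\aleph^+}$; in parallel I enumerate all subsets of $I$ as $(S_\alpha)_{\alpha<\aleph^+}$ to drive the ultrafilter property.

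Next I construct a chain of proper filters $(F_\alpha)_{\alpha<\aleph^+}$ starting from the filter $F_0$ generated by $\{I_n\}_n$, taking unions at limits. At a successor stage $\alpha+1$, if $f_\alpha$ happens to take values in $F_\alpha$ (otherwise skip this part), I choose indices $\{\xi(\alpha,i)\}_{i<\beta}$ from the independent family not yet used at earlier stages (possible since fewer than $\aleph^+$ indices have been consumed) and define
\[
g_\alpha(u)=f_\alpha(u)\cap \bigcap_{i\in u} A_{\xi(\alpha,i)}\qquad (u\in P_f(\beta)).
\]
Then $g_\alpha$ is additive by distributivity and $g_\alpha\leq f_\alpha$ by construction. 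I adjoin the range of $g_\alpha$ to $F_\alpha$, and I also adjoin whichever of $S_\alpha$ or $I\setminus S_\alpha$ keeps the filter proper, obtaining $F_{\alpha+1}$. The ultrafilter $\U=\bigcup_{\alpha<\aleph^+}F_\alpha$ is then countably incomplete, and every monotonic $f\colon P_f(\beta)\to \U$ (with $\beta\leq\aleph$) appears as some $f_\alpha$ with $f_\alpha$ already taking values in $F_\alpha$ (by a cofinality argument, since $\beta<\aleph^+$ and $\U$ has cofinality $\aleph^+$), so $g_\alpha$ witnesses $\aleph^+$-goodness.

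The main obstacle is the properness verification in the recursive step: one must show that $F_\alpha$ together with $\{g_\alpha(u):u\in P_f(\beta)\}$ (and the chosen half of $S_\alpha$) still has the finite intersection property. This is precisely where the independence of the $A_\xi$'s pays off: any finite intersection involving the new sets reduces to a finite intersection of elements of $F_\alpha$ meeting a nontrivial Boolean combination of independent sets not previously committed, which is nonempty by independence. The delicate part is the bookkeeping that ensures fresh independent sets remain available at every one of the $\aleph^+$ stages, which is where the hypothesis $|I|\geq\aleph$ (giving $2^{|I|}\geq\aleph^+$ independent sets) is used.
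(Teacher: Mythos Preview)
The paper does not prove this proposition at all: it is stated with the citation \cite[Theorem 6.1.4]{Kes} and no argument is given, so there is nothing to compare your approach against on the paper's side. That said, your sketch contains a genuine error that would make the proof fail as written.

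The problem is your formula for the additive refinement. You set
\[
g_\alpha(u)=f_\alpha(u)\cap \bigcap_{i\in u} A_{\xi(\alpha,i)},
\]
and claim this is additive ``by distributivity''. It is not. Since $f_\alpha$ is only monotonic, for $u,w\in P_f(\beta)$ one has $f_\alpha(u\cup w)\subseteq f_\alpha(u)\cap f_\alpha(w)$, and hence
\[
g_\alpha(u\cup w)=f_\alpha(u\cup w)\cap\bigcap_{i\in u\cup w}A_{\xi(\alpha,i)}\ \subseteq\ \bigl(f_\alpha(u)\cap f_\alpha(w)\bigr)\cap\bigcap_{i\in u\cup w}A_{\xi(\alpha,i)}=g_\alpha(u)\cap g_\alpha(w),
\]
but the reverse inclusion has no reason to hold. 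Intersecting a merely monotonic $f_\alpha$ with an additive family does not produce an additive function; it produces another monotonic one. The Keisler--Kunen construction handles this differently: one works not with a plain independent family of sets but with an independent family of \emph{functions} $I\to\aleph$ (equivalently, partitions), and at each stage one uses a fresh function $h$ together with a bijection $P_f(\beta)\to\aleph$ to encode \emph{all} finite values $f_\alpha(v)$ simultaneously, so that the resulting $g_\alpha(u)$ is by construction contained in $\bigcap_{v\subseteq u} f_\alpha(v)$ and genuinely additive. Your bookkeeping and properness discussion are in the right spirit, but without a correct definition of $g_\alpha$ the core step does not go through.
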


For a locally convex space $E$ let $\aleph_U(E)$ be the least
cardinality of a basis of neighborhoods of zero, $\aleph_B(E)$ be
the least cardinality of a fundamental (i.e. cofinal with respect
to inclusion) family of bounded sets and $\aleph(E) = \max
(\aleph_U(E), \aleph_B(E))$. Since by \cite[Corollary 16.3 ]{Kak}
every metrizable locally convex space that has a fundamental
sequence of bounded sets is normable,  for Fr\'echet space $E$
with $\aleph (E)=\aleph_0$,  $E$ is Banach space, where
$\aleph_0$ is the first infinite cardinal number.

%%%%%%%%%%%%%%%%%%%%%%%%%%%%%%%%%%%%%%%%%%%%%%%%%%%%%%%%%%%%
%%%%%%%%%%%%%%%%%%%%%%%%%%%%%%%%%%%%%%%%%%%%%%%%%%%%%%%%%%%%%%%%%%%%%%%%%%%%%%%%%%%%%%%%%%%%%
%%%%%%%%%%%%%%%%%%%%%%%%%%%
 \section{Ultrapower of Fr\'echet algebra}

 %%%%%%%%%%%%%%%%%%%%%%%%%%%%%%%%%%%%%%%%%%%%%%%%%%%%%%%%%%%%%%%%%%%%%%%%%%%%%%%%
 Let $I$ be an index
set that equipped with a countably incomplete ultrafilter $\U$.
The linear subspace $\ell_{\infty}(I,\A)$ of $\prod_i{\A}$ for
Fr\'echet algebra $(\A,(\p))$ can be define  as follows:
$$\ell_{\infty}(I,{\A}):=\{(x_i)\in \prod_i{\A}:{\sup}_i{\p}(x_i)<\infty,\mbox{for
all }n\in{\nm} \}.$$

 This space with respect to semi-norms $Q_n$
which are defined as
$$Q_n((x_i)):=\sup_{i\in I}{\p}(x_i), \quad\quad (x_i)\in
\ell_{\infty}(I,{\A}),$$ is a Fr\'echet space. Since for any
$(x_i)\in \ell_{\infty}(I,{\A})$, the set $\{{\p}(x_i):i\in I\}$
is bounded in $\Bbb{R}$, the Proposition ~\ref{fil1} implies that
$\lim_{\U}{\p}(x_i)$ exists and closed subspace $\mathcal{N}_{\U}$ can be defined as follows:
$$\mathcal{N}_{\U}=\{(x_i)\in
\ell_{\infty}(I,{\A}):\lim_{\U}{\p}(x_i)=0 \mbox{ for all } n \in
\nm \}.$$
%%%%%%%%%%%%%%%%%%%%%%%%%%%%%%%%%%%%%%%%%%%%%%%%%%%%%%%%%%%%%%%%%%%%%%%%%%%%%%%%%%%%%%%%%
%%%%%%%%%%%%%%%%%%%%%%%%%%
%%%%%%%%%%%%%%%%%%%%%%%%%%%%%%%%%%%%%%5
So the quotient space
$(\A)_{\U}:=\frac{\ell_{\infty}(I,{\A})}{\mathcal{N}_{\U}}$ is
well defined and its elements are denoted by
$(x_i)_{\U}:=(x_i)+\mathcal{N}_{\U}$ for $(x_i)\in
\ell_{\infty}(I,{\A})$.

Using above notations for all $n\in \Bbb{N}$ we can define
increasing semi-norms $Q'_n$ as follows on $({\A})_{\U}$:
\begin{equation}\label{usd}
Q'_n((x_i)_{\U}):=\inf\{Q_n((x_i)+(y_i)):(y_i)\in
\mathcal{N}_{\U}\}.
\end{equation}
Given semi-norms on ${(\A)}_{\U}$ make this space complete
according to  \cite[Proposition 3.2.7] {Sim},  so $({\A})_{\U}$ is
Fr\'echet space. As $\A$ is algebra, for all
$(x_i)_{\U},(y_i)_{\U}\in ({\A})_{\U}$ the product is defined by
$(x_i)_{\U}\cdot (y_i)_{\U}:=(x_iy_i)_{\U}$ and ${(\A)}_{\U}$
becomes Fr\'echet algebra.

\begin{proposition}\label{2.2}
Let $\A$ be a Fr\'echet algebra. Then for every $(x_i)_{\U}\in
{(\A)}_{\U}$
\begin{equation}\label{us}
Q'_n((x_i)_{\U})=\lim_{\U}\p(x_i).
\end{equation}
\end{proposition}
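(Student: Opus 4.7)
The plan is to prove the equality by establishing the two inequalities $Q'_n((x_i)_{\U}) \ge \lim_{\U}\p(x_i)$ and $Q'_n((x_i)_{\U}) \le \lim_{\U}\p(x_i)$ separately. First I would note that the right-hand side is well defined: since $(x_i) \in \ell_{\infty}(I,\A)$, the scalar family $\{\p(x_i)\}_{i\in I}$ lies in a compact interval of $\Bbb{R}$, so by Proposition~\ref{fil1} the ultralimit $\lim_{\U}\p(x_i)$ exists.

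For the lower bound, I would fix any $(y_i) \in \mathcal{N}_{\U}$ and combine the reverse triangle inequality $|\p(x_i+y_i) - \p(x_i)| \le \p(y_i)$ with $\lim_{\U}\p(y_i)=0$ to get $\lim_{\U}\p(x_i+y_i) = \lim_{\U}\p(x_i)$. Since an ultralimit of a bounded family is dominated by its supremum, this yields
$$Q_n((x_i)+(y_i)) \;=\; \sup_i \p(x_i+y_i) \;\ge\; \lim_{\U}\p(x_i+y_i) \;=\; \lim_{\U}\p(x_i),$$
and taking the infimum over $(y_i) \in \mathcal{N}_{\U}$ in the defining formula \eqref{usd} gives $Q'_n((x_i)_{\U}) \ge \lim_{\U}\p(x_i)$.

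For the reverse inequality, fix $\varepsilon > 0$, set $L = \lim_{\U}\p(x_i)$, and consider $A = \{i \in I : \p(x_i) < L + \varepsilon\}$, which lies in $\U$ by the definition of ultralimit. Define $(y_i)$ by $y_i = 0$ for $i \in A$ and $y_i = -x_i$ for $i \notin A$. Then $(y_i) \in \ell_{\infty}(I,\A)$, and because $\{i \in I : P_m(y_i)=0\} \supseteq A \in \U$ for every $m \in \nm$, we have $\lim_{\U} P_m(y_i) = 0$ for \emph{all} $m$, so $(y_i) \in \mathcal{N}_{\U}$. On the other hand $\p(x_i+y_i)$ equals $\p(x_i) < L + \varepsilon$ on $A$ and $0$ off $A$, so $Q_n((x_i)+(y_i)) \le L+\varepsilon$. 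Hence $Q'_n((x_i)_{\U}) \le L + \varepsilon$, and letting $\varepsilon \to 0$ finishes the argument.

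The only mild subtlety I foresee is confirming that the correction sequence $(y_i)$ constructed for a given $n$ actually lies in $\mathcal{N}_{\U}$, i.e.\ that $\lim_{\U} P_m(y_i)=0$ holds for \emph{every} seminorm index $m$ and not just the fixed $n$ used to define $A$. This works out because $y_i$ vanishes on $A \in \U$ independently of $m$, so no diagonal argument across the countable family $(P_m)$ is needed; everything else reduces to routine properties of ultralimits and of the quotient seminorm \eqref{usd}.
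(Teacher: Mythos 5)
Your proof is correct and complete. The paper itself gives no argument here --- it simply defers to the proof of Theorem 2.4 in Kang's paper on ultraproducts of locally convex spaces --- and your two-inequality argument is exactly the standard one that reference carries out: the reverse triangle inequality plus ``ultralimit $\le$ supremum'' for the lower bound, and the truncation $(y_i)$ supported off the set $A=\{i: \p(x_i)<L+\varepsilon\}\in\U$ for the upper bound. Your closing remark is also the right point to flag: since $y_i$ vanishes on a single set $A\in\U$ chosen independently of the seminorm index, membership of $(y_i)$ in $\mathcal{N}_{\U}$ holds for all $P_m$ simultaneously, so the countability of the seminorm family causes no difficulty.
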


\begin{proof} Similar  to the proof of \cite[Theorem 2.4] {Kan}.
%\section{Some basic results.}\label{3}
\end{proof}

We now develop a new version of bounded approximate identity in
Fr\'echet algebra which is called locally bounded approximate
identity.
\begin{definition}
Let $({\A},(P_n))$ be a Fr\'echet algebra. Then $\A$ has a locally
bounded approximate identity if for all $n\in \Bbb{N}$ there
exists a net $(u_i)_{i\in I}$ and a constant $C_n>0$ such that
for all $i\in I$, $P_n(u_i)<C_n$ and for all $a\in {\A}$,
$P_n(a-u_ia)\rightarrow 0$ and $P_n(a-au_i)\rightarrow 0$.
Moreover, it is assumed  that $\A$ has a bounded approximate
identity if there exists a net $(u_i)$ and a constant $C>0$ such
that  $P_n(u_i)<C$, $P_n(a-u_ia)\rightarrow 0$ and
$P_n(a-au_i)\rightarrow 0$, for all $i\in I, n\in {\Bbb N}, a\in
{\A}$.
\end{definition}
 Similarly left and right (locally) bounded approximate identity will be defined.

\begin{definition}
Let $({\A},(P_n))$ be a Fr\'echet algebra. Then $\A$ has a locally
bounded approximate unit if for all $n\in \Bbb{N}$, $a\in \A$ and
$\epsilon>0$, there exists a constant $C_n>0$ and $u\in \A$ such
that $P_n(u)<C_n$, $P_n(a-ua)<\epsilon$ and $P_n(a-au)<\epsilon$.
 $\A$ has a bounded approximate unit if for all $a\in \A$ there
exists $u\in \A$ such that for all $n\in \Bbb{N}$, and
$\epsilon>0$ there exists a constant $C>0$ such that $P_n(u)<C$,
$P_n(a-ua)<\epsilon$ and $P_n(a-au)<\epsilon$.
\end{definition}

 Now we investigate some relation between unit and (locally) bounded approximate unit of $\A$ and $({\A})_{\U}$
 the general structure of our proof is similar to the proof that is used by Daws in \cite {Daw},
  the only difference being that  we applied the semi-norms instead of the norms.
\begin{proposition}\label{3.1}
For a Fr\'echet algebra $\A$, $({\A})_{\U}$ is unital if and only
if $\A$ is unital.
\end{proposition}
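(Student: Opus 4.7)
The implication ``$\A$ unital $\Rightarrow (\A)_\U$ unital'' is the easy one: if $e$ is the unit of $\A$, then the constant family $(e)_{i \in I}$ lies in $\ell_\infty(I,\A)$ and its class $(e)_\U$ satisfies $(e)_\U (x_i)_\U = (ex_i)_\U = (x_i)_\U$ for every $(x_i)_\U \in (\A)_\U$, and symmetrically from the right; so $(e)_\U$ is the unit of $(\A)_\U$.

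For the converse, suppose $(e_i)_\U$ is the unit of $(\A)_\U$. The plan is to extract from this an actual unit of $\A$, following (as the authors indicate) the Banach-space argument of Daws with the single norm replaced by the countable family $(P_n)$. The key ingredients are: (i) by Proposition~\ref{2.2}, $Q'_n((e_i)_\U) = \lim_\U P_n(e_i) =: M_n < \infty$ for each $n$, so, after adjusting representatives on a $\U$-small set, one may assume $P_n(e_i) \leq M_n$ for all $i$ and each fixed $n$; (ii) applying the unit property to each constant family $(a)_{i\in I}$ gives
\[ \lim_\U P_n(e_i a - a) = 0 = \lim_\U P_n(a e_i - a) \]
for every $a \in \A$ and every $n$; (iii) applying the unit property to an arbitrary bounded family $(a_i) \in \ell_\infty(I,\A)$ gives the stronger statement $\lim_\U P_n(e_i a_i - a_i) = \lim_\U P_n(a_i e_i - a_i) = 0$. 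I would then argue by contradiction: if $\A$ were not unital, for each $i$ one could choose a witness $a_i \in \A$ showing that $e_i$ fails to be a unit; assembling these into a bounded family with $P_n(e_i a_i - a_i) \geq \delta$ on some set in $\U$ would contradict (iii).

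The main obstacle is precisely this quantitative selection. Non-unitality of $\A$ only asserts the existence of \emph{some} witness $a_i$ for each $i$; upgrading this to a bounded family $(a_i) \in \ell_\infty(I,\A)$ with the deviation $P_n(e_i a_i - a_i)$ bounded below by a fixed $\delta > 0$ on a $\U$-large set requires a careful uniformity argument. In the Fr\'echet setting this must be done compatibly across the whole countable family of seminorms, exploiting both their increasing character and the countable incompleteness of $\U$ (via the decreasing sequence $(U_n) \subset \U$ with empty intersection) to coordinate the choices across $n$. This is where the proof most closely mirrors Daws's construction, with each occurrence of the norm replaced by the appropriate $P_n$.
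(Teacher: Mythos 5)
Your forward direction and your items (i)--(iii) match the paper, but the way you propose to close the converse --- by contradiction, choosing for each $i$ a witness $a_i$ to the failure of $e_i$ to be a unit --- has a genuine gap, and it is exactly the one you flag: non-unitality of $\A$ gives, for each $i$, only \emph{some} $a_i$ and \emph{some} $n$ with $P_n(e_ia_i-a_i)>0$, with no uniform lower bound $\delta$ and no single seminorm working for all $i$. There is no way to manufacture such a $\delta$ from the hypothesis ``$\A$ has no unit'' alone; the assertion that bounded elements whose defect tends to $0$ force an actual unit is essentially the content of the proposition itself, so the contradiction scheme is circular unless that lemma is supplied separately.

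The paper avoids this by arguing directly and choosing the $a_i$ differently: for fixed $m$ and $\eps>0$, take $a_i$ with $P_n(a_i)\le 1$ for all $n$ which \emph{nearly attains the supremum}
$$\sup\{P_m(a-e_ia): a\in\A,\ P_n(a)\le 1 \mbox{ for all } n\in\Bbb{N}\}.$$
Feeding the bounded family $(a_i)$ into the unit property of $(e_i)_{\U}$ (your item (iii)) shows that this supremum, and its right-handed analogue, tend to $0$ along $\U$; this is precisely the quantitative uniformity you were missing. Moreover it yields more than a contradiction: for $i,j$ in a suitable member of $\U$ one gets $P_n(e_i-e_j)\le P_n(e_i-e_ie_j)+P_n(e_j-e_ie_j)<2\eps$, so the family $(e_i)$ is $\U$-Cauchy; by completeness of $\A$ one extracts a limit $e_{\A}\in\A$, which is then verified to be a unit. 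In short, the correct choice of $a_i$ is a near-maximizer of the defect of $e_i$ over the common unit ball, not a witness to non-unitality, and the conclusion is reached by constructing the unit rather than by contradiction.
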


\begin{proof} Because for a unital Fr\'echet algebra
with unity $e$ we can assume $\p(e)=1$ for all $n\in {\Bbb N}$,
(see \cite {Gol}).
$({\A})_{\U}$ is clearly unital.\\
Now let $e=(e_i)_{\U}\in ({\A})_{\U}$ be unit for $({\A})_{\U}$.
%Since $(e_i)_{\U}$ is in equivalent class we can choose $e_i$
%such that $\p(e_i)=||e|| \mbox{ for all n}\in\Bbb{N}$,
Then for all $\epsilon>0$, $i$ and $m\in {\Bbb N}$ there exist
$a_i\in{\A}$ such that for all $n\in \Bbb{N}$, $\p(a_i)\leq 1$ and
$$P_m(a_i-e_ia_i)\geq\sup\{\p(a-e_ia):a\in {\A},\p(a)\leq 1\mbox{ for
all n}\in \Bbb{N}\}-\epsilon.$$
 Let $b=(a_i)\in ({\A})_{\U}$ then
$b=eb=(e_ia_i)\in ({\A})_{\U}$ and
$$0=\lim_{\U}P_m(a_i-e_ia_i)\geq\lim_{\U}\sup\{\p(a-e_ia):a\in {\A},\p(a)\leq 1\mbox{ for
all n}\in \Bbb{N}\}-\epsilon.$$

As $\epsilon>0$ was arbitrary we have
$$\lim_{\U}\sup\{\p(a-e_ia):a\in {\A},\p(a)\leq 1\mbox{ for
all n}\in \Bbb{N}\}=0.$$

With the same method we have;
$$\lim_{\U}\sup\{\p(a-ae_i):a\in {\A},\p(a)\leq 1\mbox{ for
all n}\in \Bbb{N}\}=0.$$

This implies that for all $\epsilon>0$, the set
$U=\{i:\p(a-ae_i)+\p(a-e_ia)<\epsilon,a\in{\A},\p(a)\leq 1 \mbox{
for all }n\in\Bbb{N}\}\in {\U}$, then for all $i,j\in {\U}$ and
$n\in \Bbb{N}$, we have
$$\p(e_i-e_j)\leq \p(e_i-e_ie_j)+\p(e_j-e_ie_j)<2\epsilon.$$
Now we can extract a cauchy sequence from the family $(e_i)$
converges to say $e_{\A}\in {\A}$. It follows that $e_{\A}$ is
unit for $\A$, as required.
\end{proof}
\begin{lemma}
Let $({\A},(P_n))$ be a Fr\'echet algebra and let $m\geq 1$ and
$C>0$. Suppose that for each $a_1,\cdots,a_m\in \A$ and
$\epsilon>0$ there exists $u\in \A$ such that
%$P_n(a_i-ua_i)<\epsilon$ and $P_n(u)<C$,
$$P_n(a_i-ua_i)<\epsilon, \quad\quad P_n(u)<C,$$
where $n\in {\Bbb N}$ and $1\leq i\leq m$. Then $\A$ has a
bounded left approximate identity.
\end{lemma}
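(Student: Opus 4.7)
The plan is to construct the bounded left approximate identity directly as a net indexed by pairs (finite subset of $\mathcal{A}$, positive real), using the hypothesis as a selection principle. The hypothesis essentially says that we have $\epsilon$-approximate left units on arbitrary finite subsets, uniformly bounded in every semi-norm by the \emph{same} constant $C$; the job is to package these into a single net.

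Concretely, I would let
\[
\Lambda := \{\lambda = (F,\varepsilon) : F \subset \mathcal{A} \text{ finite},\ \varepsilon > 0\}
\]
and direct $\Lambda$ by $(F,\varepsilon) \le (F',\varepsilon')$ iff $F \subseteq F'$ and $\varepsilon' \le \varepsilon$. This is a standard directed set. For each $\lambda = (\{a_1,\dots,a_m\},\varepsilon) \in \Lambda$, the hypothesis produces an element $u_\lambda \in \mathcal{A}$ satisfying
\[
P_n(a_i - u_\lambda a_i) < \varepsilon \quad (1 \le i \le m,\ n \in \mathbb{N}) \quad \text{and} \quad P_n(u_\lambda) < C \quad (n \in \mathbb{N}).
\]
I would then verify that $(u_\lambda)_{\lambda \in \Lambda}$ is the required net: given $a \in \mathcal{A}$, $n \in \mathbb{N}$ and $\varepsilon_0 > 0$, the index $\lambda_0 = (\{a\},\varepsilon_0)$ has the property that for all $\lambda \ge \lambda_0$, $a \in F$ and $\varepsilon \le \varepsilon_0$, so $P_n(a - u_\lambda a) < \varepsilon \le \varepsilon_0$. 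This shows $P_n(a - u_\lambda a) \to 0$ for every $a$ and every $n$, while the uniform bound $P_n(u_\lambda) < C$ is built into the construction.

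There is essentially no deep obstacle here; the result is a formal consequence of the hypothesis once the correct indexing set is chosen. The only subtle points to be careful about are (i) that the \emph{same} constant $C$ controls $P_n(u_\lambda)$ for every $n$ simultaneously (which the hypothesis provides verbatim, so matches the notion of \emph{bounded}, as opposed to locally bounded, approximate identity defined earlier), and (ii) that the net is genuinely directed, which is routine since the union of two finite sets is finite and the minimum of two positive reals is positive. I would conclude by stating that $(u_\lambda)_{\lambda \in \Lambda}$ is a bounded left approximate identity for $\mathcal{A}$ in the sense of the definition given above.
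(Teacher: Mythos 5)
There is a genuine gap, and it sits exactly where you wave it away: in the claim that ``the hypothesis essentially says that we have $\epsilon$-approximate left units on arbitrary finite subsets.'' It does not. In the statement $m$ is a \emph{fixed} integer (``let $m\geq 1$''), and the hypothesis only supplies, for each $\epsilon>0$, an element $u$ with $P_n(u)<C$ that approximately left-unitizes a prescribed family of at most $m$ elements. Your directed set $\Lambda$ ranges over \emph{all} finite subsets $F\subset\A$, and for $|F|>m$ the hypothesis hands you no $u_\lambda$ at all. The entire content of the lemma --- and the reason the paper proves it by appeal to \cite[Proposition~2.9.14]{Dal} --- is precisely the bootstrap from $m$-element sets to arbitrary finite sets \emph{with a uniform bound}. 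Once that bootstrap is available, the net construction you describe is indeed the routine final step; but as written, your argument assumes the conclusion of the hard part.

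The missing step is an induction on the size of the finite set. The naive attempt is: given $a_1,\dots,a_{m+1}$ and $\epsilon>0$, choose $u$ with $P_n(a_j-ua_j)$ small for $j\leq m$, then choose $v$ handling the single element $a_{m+1}-ua_{m+1}$, and set $e=u+v-vu$, so that formally $a_j-ea_j=(1-v)(a_j-ua_j)$ is small for all $j\leq m+1$. This already exposes the subtlety: submultiplicativity of the $P_n$ gives only $P_n(e)\leq 2C+C^{2}$, so the bound degrades at each inductive step, and over the directed family of all finite subsets the resulting elements need not be bounded by \emph{any} single constant --- so even the paper's weaker notion of ``bounded'' (some constant $C$) is not obtained this way. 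Preserving a uniform bound requires the more careful argument of \cite[Proposition~2.9.14]{Dal} (going back to Altman and Dixon), adapted from norms to the submultiplicative semi-norms $P_n$. To repair your proof you must either carry out that induction with an honest control of the bound, or strengthen the hypothesis to hold for all finite subsets with the same $C$ --- in which case your net argument is complete but the lemma proved is substantially weaker than the one stated.
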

\begin{proof}
The proof is similar to the proof of \cite[Proposition 2.9.14] {Dal}.
\end{proof}
The above lemma also holds for bounded (right) approximate identity.
\begin{corollary}
A Fr\'echet algebra has bounded (right,left) approximate identity
if and only if it has bounded (right,left) approximate unity.
\end{corollary}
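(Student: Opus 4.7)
The plan is to verify the two implications separately, with the heavy lifting deferred to the preceding Lemma. The forward implication (identity $\Rightarrow$ unit) is essentially tautological: given a bounded (left, right, or two-sided) approximate identity $(u_i)_{i\in I}$ with uniform seminorm bound $C$, for each $a \in \A$, $n \in \mathbb{N}$, and $\varepsilon > 0$ the defining convergences $P_n(a - u_i a) \to 0$ and/or $P_n(a - a u_i) \to 0$ let me extract a single index $i$ whose $u := u_i$ satisfies $P_n(u) < C$ together with the required approximation; this $u$ witnesses the corresponding bounded approximate unit property with the same constant $C$.

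The reverse implication (unit $\Rightarrow$ identity) is where the substance lies, and it is already contained in the preceding Lemma: the bounded left approximate unit hypothesis is precisely the $m = 1$ instance of the Lemma's hypothesis, so the Lemma directly delivers a bounded left approximate identity. The right-sided case follows from the right analogue of the Lemma mentioned just after its statement. For the two-sided version, I would combine a bounded left approximate identity $(u_i)_{i \in I}$ and a bounded right approximate identity $(v_j)_{j \in J}$ (each bounded by $C$) into the net $w_{(i,j)} := u_i + v_j - v_j u_i$, indexed by the product order on $I \times J$. The identities $a - w_{(i,j)} a = (a - u_i a) - v_j(a - u_i a)$ and $a - a w_{(i,j)} = (a - a v_j) - (a - a v_j) u_i$, combined with submultiplicativity of each $P_n$, yield the estimates $P_n(a - w_{(i,j)} a) \le (1 + C)\, P_n(a - u_i a)$ and $P_n(a - a w_{(i,j)}) \le (1 + C)\, P_n(a - a v_j)$ (both tending to zero along the product order), together with the uniform bound $P_n(w_{(i,j)}) \le 2C + C^2$.

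The only real obstacle is the Lemma itself, whose proof (as indicated in the paper) adapts Dales' bidual / weak-$*$ scheme to the family of seminorms $(P_n)$; once that is in hand, the corollary is just a clean packaging of the Lemma with the trivial forward direction and the standard construction gluing one-sided approximate identities into a two-sided one.
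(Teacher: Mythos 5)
Your proposal is correct and matches the route the paper intends: the paper states this corollary without proof, treating it as an immediate consequence of the preceding Lemma (the $m=1$ case gives the left version, the remarked right analogue gives the right version), with the converse direction being trivial. Your additional spelling-out of the two-sided case via the standard net $w_{(i,j)} = u_i + v_j - v_j u_i$ and the submultiplicativity estimates is exactly the canonical way to fill in what the paper leaves implicit, so there is nothing to flag.
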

%%%%%%%%%%%%%%%%%%%%%%%%%%%%%%%%%%%%%%%%%%%%%%%%%%%%%%%%%%%%%%%%%%%%%%%%%%%%%%%%%%%%%%%%%%%%%%%%%%%%%%%%%%%%%%%%%%%%%%%
\begin{proposition}\label{3.2}
Let $\A$ be a Fr\'echet algebra and ${\U}$ be a countably
incomplete ultrapower. Then $({\A})_{\U}$ has (locally) bounded
approximate identity if and only if $\A$ dose have it too. The
same statement holds for left or right (locally) bounded
approximate identities.
\end{proposition}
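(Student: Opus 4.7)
The plan is to mirror the structure of Proposition~\ref{3.1} and, via the preceding Corollary, to reduce everything to the approximate-unit formulation; the bounded and locally bounded variants will then be handled in parallel by the same Daws-style extraction argument.

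For the implication ``$(\A)_{\U}$ has (locally) bounded approximate unit $\Rightarrow$ $\A$ has it'', which is the easier direction, I would embed $a \in \A$ as the constant class $\hat a = (a)_{j \in I} + \mathcal{N}_{\U}$ so that Proposition~\ref{2.2} gives $Q'_n(\hat a) = P_n(a)$. Applying the (locally) bounded approximate unit hypothesis in $(\A)_{\U}$ to $\hat a$ with parameters $n, \eps$ produces some $u = (u_j)_{\U}$ with $Q'_n(u) < C_n$ and $Q'_n(\hat a - u \hat a), Q'_n(\hat a - \hat a u) < \eps$. Proposition~\ref{2.2} rewrites these as
\[ \lim_{\U} P_n(u_j) < C_n, \quad \lim_{\U} P_n(a - u_j a) < \eps, \quad \lim_{\U} P_n(a - a u_j) < \eps. \]
Each strict inequality forces the corresponding index set to lie in $\U$, and their (finite) intersection is still in $\U$, hence nonempty; any $u_j$ in it witnesses (locally) bounded approximate unit in $\A$.

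For the converse, given $a = (a_j)_{\U} \in (\A)_{\U}$ and $n, \eps > 0$, I would apply the (locally) bounded approximate unit hypothesis of $\A$ coordinatewise: for each $j$ pick $u_j \in \A$ with $P_n(u_j) < C_n$, $P_n(a_j - u_j a_j) < \eps$ and $P_n(a_j - a_j u_j) < \eps$. Setting $u := (u_j)_{\U}$, Proposition~\ref{2.2} then yields $Q'_n(u) \leq C_n$, $Q'_n(a - ua) \leq \eps$ and $Q'_n(a - au) \leq \eps$, which (after enlarging $C_n$ by an arbitrarily small amount) is the desired (locally) bounded approximate unit in $(\A)_{\U}$.

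The main obstacle is ensuring that the family $(u_j)_{j \in I}$ used in the converse actually lies in $\ell_{\infty}(I, \A)$ --- equivalently, that $\sup_j P_m(u_j) < \infty$ for \emph{every} $m$, not just the fixed $n$ appearing in the approximate-unit step. In the ordinary bounded case a single constant $C$ bounds every $P_m(u_j)$, so this is automatic. For the genuinely locally bounded case one must draw each $u_j$ from a single net $(v_\alpha)_{\alpha \in J} \subset \A$ satisfying $P_m(v_\alpha) \leq C_m$ for every $m$, setting $u_j := v_{\alpha(j)}$ for an appropriately chosen $\alpha(j)$. Producing this uniformly locally bounded net --- or, equivalently, arguing seminorm by seminorm through the representation $\A = \lim_{\longleftarrow} \A_n$ and quoting Daws' Banach-algebra theorem at each $\A_n$ --- is where the real technical care is needed; otherwise the argument is a direct transcription of the proof of Proposition~\ref{3.1} with semi-norms replacing norms, as the paper already emphasizes.
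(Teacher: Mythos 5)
Your strategy is the one the paper itself intends: its ``proof'' is a single sentence deferring to the approximate-unit corollary (the Dixon-type reduction) and to Daws' Proposition 2.2, and your proposal is precisely that argument written out with seminorms in place of norms. The forward direction as you present it is sound in the locally bounded case, because for fixed $n$ and $\eps$ only finitely many sets from $\U$ need to be intersected. (In the uniformly bounded case even this direction needs an extra word: the requirement $P_m(u)<C$ for \emph{every} $m$ is a countable conjunction, and a countably incomplete ultrafilter need not contain the corresponding countable intersection of index sets; note also that the paper's corollary reducing identities to units is only stated for the bounded, not the locally bounded, variant.)

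The obstacle you single out in the converse direction is a genuine gap, and neither your proposal nor the paper closes it. Under the paper's definition, a locally bounded approximate identity supplies, for each $n$, a \emph{separate} net $(u_i^{(n)})$ controlled only in the single seminorm $P_n$; nothing bounds $P_m(u_i^{(n)})$ for $m>n$. Hence the coordinatewise choice $(u_j)_{j\in I}$ need not satisfy $\sup_j P_m(u_j)<\infty$ for every $m$, so it need not lie in $\ell_{\infty}(I,\A)$ at all, and $(u_j)_{\U}$ is then simply undefined. Your proposed repair --- drawing every $u_j$ from one net that is bounded in each seminorm --- is exactly what is needed, but such a net is not provided by the definition as written (it would amount to strengthening ``locally bounded'' to ``bounded as a subset of the Fr\'echet space''), and the alternative route through the Banach algebras $\A_n$ of the projective limit raises the separate, nontrivial problem of lifting the approximate units of the $\A_n$ back to $\A$ coherently. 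The converse implication therefore stands or falls with this point, which the paper's citation-only proof does not address either; you were right to flag it, but flagging it is not the same as closing it.
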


\begin{proof}
Since Dixon's Theorem (see \cite[Prposition 2.9.3]{Dal}) also
holds for Fr\'echet algebras
 with respect to the above corollary, the proof is the same as the proof \ref{3.1} due to \cite[Proposition 2.2] {Daw}.
\end{proof}

%%%%%%%%%%%%%%%%%%%%%%%%%%%%%%%%%%%%%%%%%%%%%%%%%%%%%%%%%%%%%%%%%%%%%%%%%%%%%%%%%%%%%%%%%%%%%%%%%%%%%%%%%%%%
%%%%%%%%%%%%%%%%%%%%%%%%%%%%%%%%%%%%%%%%%%%%%%%%%%%%%%%%%%%%%%%%%%%%%%%%%%%%%%%%%%%%%%%%%%%%%%%%%%%%%%%%%%%%%%%%%%%%%%
%%%%%%%%%%%%%%%%%%%%%%%%%%%%%%%%%%%%%%%%%%%%%%%%%%%%%%%%%%%%%%%%%%%%%%%%%%%%%%%%%%%%%%%%%%%%%%%%%%%%%%%%%%%%%%%%%%%%%%
%%%%%%%%%%%%%%%%%%%%%%%%%%%%%%%%%%%%%%%%%%%%%%%%%%%%%%%%%%%%%%%%%%%%%%%%%%%%%%%%%%%%%%%%%%%%%%%%%%%%%%%%%%%%%%%%%%%%%%

\section{Diagonal for Fre\'chet algebras}\label{4}
Let $({\A},(R_n))$ and $({\mathcal{B}},(S_n))$ be two Fr\'echet
algebras. Then by \cite{kot} we can define the projective tensor
product ${\A}\hat{\otimes}\mathcal{B}$ with induced topology by
the following sub-multiplicative semi-norms
$$(R_n{\otimes}S_n)(T):=\inf\{\sum_{i=1}^{k}R_n(x_i)S_n(y_i):T=\sum_{i=1}^{k}{x_i{\otimes}y_i}\in {\A}{\otimes}\mathcal{B}\}.$$
And its completion given by
$$(R_n\hat{\otimes}S_n)(T)=\inf\{\sum_{i=1}^{\infty}R_n(x_i)S_n(y_i):T=\sum_{i=1}^{\infty}{x_i{\otimes}y_i}\in {\A}\hat{\otimes}\mathcal{B}\}.$$

The projective tensor product
 ${\A}\hat{\otimes}{\A}$ is a Fr\'echet $\A$-bimodule where the multiplication is specified by
$$a\cdot (b\otimes c) := ab\otimes c \quad and\quad (b\otimes c)\cdot a := b\otimes ca\quad \quad (a, b, c \in {\A}).$$

By above discussion we can define $(({\A}\hat{\otimes}{\mathcal
B})_{\U},(R_n\hat{\otimes}S_n)')$, where $(R_n\hat{\otimes}S_n)'$
is the semi-norm that we obtain from $R_n\hat{\otimes}S_n$ as
constructed of equation \ref{usd} in section 2, so
$({\A}\hat{\otimes}{\mathcal B})_{\U}$ is Fr\'echet algebra and
by Proposition \ref {2.2} we have:
$$(R_n\hat{\otimes}S_n)'(T_i)_{\U}=\lim_{\U}R_n\hat{\otimes}S_n(T_i),\quad\quad (T_i)_{\U}\in ({\A}\hat{\otimes}{\mathcal B})_{\U}.$$
%where $$.\\
%%%%%%%%%%%%%%%%%%%%%%%%%%%%%%%%%%%%%%%%%%****************************************************************************************
%%%%%%%%%%%%%%%%%%%%%%%%%%%%%%%%%%%%%%%%%%****************************************************************************************
%%%%%%%%%%%%%%%%%%%%%%%%%%%%%%%%%%%%%%%%%%****************************************************************************************
\noindent {\bf Note.} For Fr\'echet algebra $({\A},(\p))$ we just
write $\ph$ instead of $\p\hat{\otimes}\p$ for
${\A}\hat{\otimes}{\A}$.\\

%\begin{definition}\label{4.3.1}
Let $(E,(R_n))$ and $(F,(S_n))$ be two Fr\'echet spaces. Then we
consider the canonical map
\begin{equation*}\label{4.3.1}
  \psi_0:(E)_{\U}\hat{\otimes}(F)_{\U}\rightarrow (E\hat{\otimes}F)_{\U},
\end{equation*}

 as in \cite{Daw} by
  $\psi_0((x_i)_{\U}\otimes (y_i)_{\U}):=(x_i\otimes y_i)_{\U}$, where $(x_i)_{\U}\in (E)_{\U}$, $(y_i)_{\U}\in(F)_{\U}$ and
   $(x_i\otimes y_i)_{\U}\in (E\hat{\otimes} F)_{\U}$.\\
If we replaced $E$ and $F$ in the above by   Fr\'echet algebra
$\A$, then $\psi_0$ is both ${\A}$-bimodule homomorphism and
$({\A})_{\U}$-bimodule homomorphism.
%\end{definition}
%%%%%%%%%%%%%%%%%%%%%%%%%%%%%%%%%%%%%%%%%%*****************************************************************************************
\begin{definition}
A locally convex space $E$ has approximation property whenever the
space of finite rank
 operators on $E$ is dense in the space of continuous operators on $E$ with respect to
 the topology of uniform convergence on all precompact subset of $E$.
\end{definition}
{\bf Note}. If $E$ is quasi-complete space then the topology of
uniform convergence on precompact subsets coincides on topology of
uniform convergence on compact subsets, especially in Fr\'echet
spaces.
\begin{lemma}
A locally convex space $E$ with the family of semi-norms
$(P_\alpha)_{\alpha\in I}$  has approximation property if and only
if for every locally convex space $F$ with the family of semi-
norms $(Q_\beta)_{\beta\in J}$, every continuous operator
$T:E\rightarrow F$, every precompact subset K of E and every
$\epsilon>0$, there exits a finite rank operator $S:E\rightarrow
F$ such that $Q_\beta(Tx-Sx)<\epsilon$ for every $x\in K$ and
$\beta \in J$.
\end{lemma}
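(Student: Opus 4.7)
The plan is to argue both implications directly from the definition, viewing the "big" criterion as the natural extension of the defining property of approximation for operators on $E$ to operators into an arbitrary target space $F$.

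For the easy direction ($\Leftarrow$), I would simply specialize the hypothesis to the case $F = E$ and $Q_\beta = P_\alpha$. Then for any continuous $T \colon E \to E$, any precompact $K \subset E$, any seminorm $P_\alpha$, and any $\epsilon > 0$, the assumption supplies a finite rank $S \colon E \to E$ with $P_\alpha(Tx - Sx) < \epsilon$ for $x \in K$. By definition this is exactly density of finite rank operators in $\mathcal{L}(E)$ with respect to the topology of uniform convergence on precompact subsets of $E$, so $E$ has the approximation property.

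For the nontrivial direction ($\Rightarrow$), the idea is to reduce to approximating the identity $I_E$, and then compose with $T$. First, the continuity of $T \colon E \to F$ yields a seminorm $P_\alpha$ on $E$ and a constant $C > 0$ with
\begin{equation*}
Q_\beta(Ty) \leq C\, P_\alpha(y) \qquad (y \in E).
\end{equation*}
Since $I_E \in \mathcal{L}(E)$, the approximation property applied to $I_E$, to the precompact set $K$, to the seminorm $P_\alpha$ and to the tolerance $\epsilon/(C+1)$, produces a finite rank operator $R \colon E \to E$ such that $P_\alpha(x - Rx) < \epsilon/(C+1)$ for every $x \in K$. Then $S := T \circ R$ is continuous and of finite rank (its range lies in the finite-dimensional image of $R$ pushed forward by $T$), and for $x \in K$ one obtains
\begin{equation*}
Q_\beta(Tx - Sx) = Q_\beta\bigl(T(x - Rx)\bigr) \leq C\, P_\alpha(x - Rx) < \epsilon,
\end{equation*}
which is the required estimate.

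There is no real obstacle; the only subtle point is to remember that the approximation property is about approximating \emph{all} continuous operators on $E$, in particular $I_E$, and that finite rank is preserved under post-composition with any linear map. If one wanted the statement to hold simultaneously for a finite collection of seminorms $Q_{\beta_1}, \ldots, Q_{\beta_k}$, one would replace $P_\alpha$ by a single seminorm dominating all of them (available because the defining family of seminorms on $E$ is directed), and then apply the same argument verbatim.
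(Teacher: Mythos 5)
Your proof is correct and follows essentially the same route as the paper's: approximate the identity operator on $E$ by a finite rank operator over the precompact set $K$, then post-compose with $T$ and use the continuity estimate $Q_\beta(Ty)\leq C\,P_\alpha(y)$ to transfer the approximation to $F$. Your version is in fact slightly cleaner in its $\epsilon$ bookkeeping (the paper ends with a bound $C\epsilon$ rather than $\epsilon$) and in requesting the identity approximation only for the seminorm(s) actually needed, but these are refinements of the same argument, not a different one.
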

%%%%%%%%%%%%%%%%%%%%%%%%%%%%%%%%%%%%%%%%%%%%%%%%%%%%%%%%%%%%%%%%%%%%%%%%%%%%%%%%%%%%%%%%%%%%%%
%%%%%%%%%%%%%%%%%%%%%%%%%%%%%%%%%%%%%%%%%%%%%%%%%%%%%%%%%%%%%%%%%%%%%%%%%%%%%%%%%%%%%%%%%%%%%%%%%%%%%%%%%%%%%%%%
%%%%%%%%%%%%%%%%%%%%%%%%%%%%%%%%%%%%%%%%%%%%%%%%%%%%%%%%%%%%%%%%%%%%%%%%%%%%%%%%%%%%%%%%%%%%%%%%%%%%%%%%%%%%%%%%%%%%%%%%%
\begin{proof}
Let $E$ has approximation property and $T:E\rightarrow F$ be
 a continuous operator. Given $\epsilon>0$ and precompact subset $K$
of $E$, there is a finite rank operator $U$ such that
$$P_\alpha(x-Ux)<\epsilon,
 \quad\quad  \alpha\in I, x\in K.$$
Since $T$ is continuous, there exists semi norms $\{P_i :
i=1,\cdots,n \}$ and a constant $C>0$, such that
 $$Q_\beta(Tx)\leq C{\sup}_{1\leq i\leq n} P_i(x),\quad\quad  \beta \in J, x\in E,$$ see ( \cite[Chapter 3, \S 1.1]{Shf}).
Now let $TU=S$. Then $S$ is finite rank operator from $E$ into
$F$ such that
 $$Q_\beta(Tx-TSx)<C{\sup}_{1\leq i\leq n}P_i(x-Sx)<C \epsilon, $$ for all $\beta\in J$. So it completes the proof.
The reverse is obvious.
\end{proof}

\begin{proposition}\label{appx}
Let $(E,(P_n))$ be a Fr\'echet space with approximation property
and let $(F,(Q_n))$ be a Fr\'echet space.  Suppose that for every
$\psi\in F'$ and for all $u=\sum_{n=1}^\infty x_n\otimes y_n\in
E\hat{\otimes} F$ where $(x_n)$ and $(y_n)$ are bounded sequences
in $E$ and $F$ respectively, if $\sum_{n=1}^\infty
\psi(y_n)x_n=0$ then $u=0$.
\end{proposition}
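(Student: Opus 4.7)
The plan is to reduce, via the Hahn--Banach theorem, the claim $u=0$ in $E\hat{\otimes}F$ to the vanishing of $\Phi(u)$ for every $\Phi\in(E\hat{\otimes}F)'$, and then to exploit the approximation property of $E$ to approximate the continuous operator associated with $\Phi$ by a finite rank one, on which the hypothesis directly forces the evaluation to vanish.

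First, I would identify $\Phi\in(E\hat{\otimes}F)'$ with a continuous bilinear form $B\colon E\times F\to\mathbb{C}$ via the universal property of the projective tensor product, and in turn with a continuous operator $T_B\colon E\to F'$ (with the strong topology), defined by $T_B(x)(y)=B(x,y)$. Continuity of $\Phi$ on the convergent series $u=\sum_n x_n\otimes y_n$ yields
\[
\Phi(u)=\sum_{n=1}^{\infty}T_B(x_n)(y_n),
\]
so the task reduces to proving this scalar series is zero.

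Next, I would apply the preceding lemma characterizing AP for locally convex spaces: for any $\epsilon>0$, compact $K\subset E$, and bounded $M\subset F$, there exists a finite rank operator $S\colon E\to F'$ with $|(T_B-S)(x)(y)|<\epsilon$ for $x\in K$ and $y\in M$. Writing $S(x)=\sum_{k=1}^{m}\phi_k(x)\psi_k$ with $\phi_k\in E'$ and $\psi_k\in F'$, the hypothesis immediately gives
\[
\sum_{n=1}^{\infty}S(x_n)(y_n)=\sum_{k=1}^{m}\phi_k\Bigl(\sum_{n=1}^{\infty}\psi_k(y_n)x_n\Bigr)=0,
\]
where the interchange of (finite) sums is justified by continuity of each $\phi_k$ and convergence of the inner series in $E$ to zero.

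The main obstacle is controlling $\sum_n(T_B-S)(x_n)(y_n)$, since AP only furnishes approximation on compact subsets whereas $\{x_n\}$ is merely bounded. The strategy is to first truncate: using convergence of $u=\sum x_n\otimes y_n$ in the projective seminorm dictated by the continuity of $B$, the tail $\sum_{n>N}x_n\otimes y_n$ contributes an arbitrarily small quantity to $\Phi(u)$. On the finite compact head $\{x_1,\dots,x_N\}$, AP delivers a finite rank $S$ with the desired uniform approximation. One must further control $\sum_{n>N}S(x_n)(y_n)$ for this particular $S$, which is handled by the identity $\sum_n S(x_n)(y_n)=0$ (so the tail equals $-\sum_{n\le N}S(x_n)(y_n)$ in value, and convergence of the partial sums forces its smallness as $N\to\infty$). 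Synchronizing the truncation index $N$, the choice of $S$, and these tail estimates so that all three contributions are simultaneously $\epsilon$-small is the key technical maneuver.
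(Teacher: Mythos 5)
Your overall strategy coincides with the paper's: identify $\Phi\in(E\hat{\otimes}F)'$ with an operator $T\in{\mathcal L}(E,F'_b)$, approximate $T$ by a finite rank $S=\sum_{k}\phi_k\otimes\psi_k$ using the approximation property, and observe that the hypothesis kills $\langle u,S\rangle=\sum_{k}\phi_k\bigl(\sum_{n}\psi_k(y_n)x_n\bigr)$. The divergence --- and the gap --- lies in how you deal with the fact that AP only gives approximation on precompact sets. Your truncation scheme stalls exactly where you flag it: after choosing $S_N$ adapted to the finite head $\{x_1,\dots,x_N\}$, you must estimate the tail $\sum_{n>N}S_N(x_n)(y_n)$. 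Knowing that $\sum_{n}S_N(x_n)(y_n)=0$ only converts this tail into $-\sum_{n\le N}S_N(x_n)(y_n)$, which is precisely the quantity you are trying to pin down; and ``convergence of the partial sums'' gives smallness of the tail only for a \emph{fixed} operator, whereas here $S_N$ changes with $N$ and nothing in the approximation property makes the family $(S_N)$ equicontinuous on $E\hat{\otimes}F$ (that would be a \emph{bounded} approximation property). As written, the truncation index, the choice of $S_N$, and the tail estimate cannot be synchronized, so the argument is circular at this step.

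The paper avoids truncation altogether by first re-representing $u$: by K\"othe's representation theorem \cite[\S 41, 4(6)]{kot}, every element of $E\hat{\otimes}F$ can be written as $u=\sum_{n}\lambda_n(x_n\otimes y_n)$ with $(x_n)$ and $(y_n)$ bounded \emph{null} sequences and $\sum_{n}|\lambda_n|<1$. Then $K=\{x_n\}\cup\{0\}$ is compact, so a single finite rank $S$ satisfies $R_M(Tx-Sx)<\epsilon$ for \emph{all} $n$ simultaneously (with $M=\{y_n\}$ bounded), and the whole series is handled in one stroke: $|\langle u,T\rangle|\le\sum_{n}|\lambda_n|\,R_M(Tx_n-Sx_n)+|\langle u,S\rangle|\le\epsilon$. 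If you replace your truncation step by this representation theorem, the rest of your argument goes through essentially as you wrote it; that representation is the missing ingredient.
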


\begin{proof} Let $u\in E\hat{\otimes} F$. Recall that
$u$ can be written as a series $u=\sum_{n=1}^\infty
\lambda_n(x_n\otimes y_n)$, where $(x_n)$, $(y_n)$ are bounded
null sequences in $E$ and $F$ respectively and scalar sequence
$\{\lambda_n\}$ with $\sum_{n=1}^{\infty}|\lambda_n|<1$ \cite[$\S
41\ 4(6)$]{kot}.
 Let  $F'_b$ and ${\mathcal L}_b(E,F'_b)$ be respectively
the spaces of linear continuous operators from $F$ into $\Bbb C$
and $E$ into $F'_b$ with respect to the bounded convergence
topology (see \cite[\S 39]{kot}). Given $T\in {\mathcal
L}_b(E,F'_b)$ and $\epsilon>0$, since $E$ has the approximation
property, for each $\varepsilon >0$, there exists a finite rank
operator $S:E\rightarrow F'_b$,  such that
 $$R_M(Tx-Sx)<\epsilon, \quad\quad x\in K=\{x_n:n\in \Bbb{N}\}\cup\{0\},$$

  where $R_M$ is the semi norm related to locally
convex space $F'_b$,  is defined by $R_M(f):=\sup_{y\in M}|f(y)|$
for all $f\in F'_b$ and  bounded subset $M$  of $F$.
 We have
$Sx=\sum_{i=1}^m\phi_i(x)\psi_i$ where $\phi_i\in E'$ and
$\psi_i\in F'$. Let
$$\langle
u,S\rangle=\langle\sum_{n=1}^\infty x_n\otimes
y_n,S\rangle:=\sum_{n=1}^\infty(Sx_n)y_n.$$ Since by
\cite[Proposition 2]{Bir} ${\mathcal L}_b(E,F'_b)=(E\hat{\otimes}
F)'$ where the right hand side has the topology of uniform
convergence on the bounded sets of the form
$\bar{\Gamma}(C\otimes D)$, such that  $C$ and $D$ are bounded
subsets in $E$ and $F$ respectively.
%%%%%%%%%%%%%%%%%%%%%%%%%%%%%%%%%%%%%%%%%%%%%%%%%%%%%%%%%%%%%%%%
In this case, we have
$$\langle u,S\rangle=\sum_{n=1}^\infty\sum_{i=1}^m \phi_i(x_n)\psi_i(y_n)=\sum_{i=1}^m\phi_i(\sum_{n=1}^\infty\psi_i(y_n)x_n)=0.$$
Therefore for bounded subset $M=\{y_n\}$ in $F$ we have
\begin{eqnarray*}
  |\langle u,T\rangle|&\leq& |\langle u,T-S\rangle|+|\langle u,S\rangle|\\
  & \leq &\sum_{n=1}^\infty |((T-S)\lambda_nx_n)y_n|  \\
  & \leq &\sum_{n=1}^\infty |\lambda_n|R_M(Tx_n-Sx_n) \\
  &\leq &\sum_{n=1}^\infty |\lambda_n|\epsilon \leq \epsilon.
\end{eqnarray*}
Since T and $\epsilon$ are arbitrary, it follows that $\langle
u,T\rangle=0$, thus $u=~0$, which is the desired conclusion.
\end{proof}

%%%%%%%%%%%%%%%%
%%%%%%%%%%%%%%%%%%%%%%%%%%%%%%%%%%%%%%%%%%%
%%%%%%%%%%%%%%%%%%%%%%%%%%%%%%%%%%%%%%%%%%%%%%%%%%%%%%%%%%%%%%%%%%
%%%%%%%%%%%%%%%%%%%%%%%%%%%%%%%%%%%%%%%%%%%%%%%%%%%%%%%%%%%%%%%%%%%%%%%%%%%%%%%%

\begin{proposition}\label{inj}
If $(E)_{\U}$ with $\aleph(E)^+$-good ultrafilter has the approximation property, then $\psi_0$ is an
injection for any Fr\'echet space $F$.
\end{proposition}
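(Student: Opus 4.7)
The plan is to deduce injectivity of $\psi_0$ from Proposition~\ref{appx} applied to the Fr\'echet spaces $(E)_{\U}$ and $(F)_{\U}$. Take $u\in\ker\psi_0$ and write $u=\sum_{k=1}^\infty \lambda_k\,\xi_k\otimes\eta_k$ with $\xi_k=(x_k^i)_{\U}\in(E)_{\U}$ and $\eta_k=(y_k^i)_{\U}\in(F)_{\U}$ bounded null sequences and $\sum_k|\lambda_k|<\infty$, as in the standard series representation of elements in a projective tensor product. Since $(E)_{\U}$ has the approximation property by hypothesis, Proposition~\ref{appx} reduces the desired conclusion $u=0$ to the claim that
\[
\sum_{k=1}^\infty \lambda_k\,\psi(\eta_k)\,\xi_k = 0\quad\text{in }(E)_{\U}
\]
for every $\psi\in((F)_{\U})'$.

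To establish this identity, I would factor the bilinear form $(\xi,\eta)\mapsto\psi(\eta)\xi$ through $\psi_0$. More precisely, for each $\psi$ I want a continuous linear map $\tilde T_\psi:(E\hat\otimes F)_{\U}\to(E)_{\U}$ satisfying $\tilde T_\psi\circ\psi_0(\xi\otimes\eta)=\psi(\eta)\xi$ on elementary tensors; the identity then propagates to $u$ by continuity and applying $\tilde T_\psi$ to the hypothesis $\psi_0(u)=0$ yields the required vanishing. The construction of $\tilde T_\psi$ begins with a uniformly bounded coordinatewise representation $(\psi_i)_{i\in I}\subset F'$ of $\psi$, that is, a family satisfying $\psi((z_i)_{\U})=\lim_{\U}\psi_i(z_i)$ for every $(z_i)_{\U}\in(F)_{\U}$. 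Given such a family, set $\tilde T_\psi((w_i)_{\U}):=((\mathrm{id}_E\otimes\psi_i)(w_i))_{\U}$; each $\mathrm{id}_E\otimes\psi_i:E\hat\otimes F\to E$ is continuous by the universal property of the projective tensor product, and Proposition~\ref{2.2} together with the uniform bound on $(\psi_i)$ translates into a semi-norm estimate showing $\tilde T_\psi$ is well-defined and continuous.

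The main obstacle is producing the coordinatewise lift of an arbitrary $\psi\in((F)_{\U})'$ to a uniformly bounded family $(\psi_i)\subset F'$. This is precisely where the hypothesis that $\U$ is $\aleph(E)^+$-good and countably incomplete is invoked: the defining additive-refinement property of good ultrafilters (every monotonic $f:P_f(\beta)\to\U$ is dominated by an additive $g:P_f(\beta)\to\U$, cf.\ \cite[Theorem~6.1.4]{Kes}) is used to amalgamate, across the countable fundamental system of semi-norms $(Q_n)$ defining the Fr\'echet topology of $F$, local representatives of $\psi$ on successive polars of bounded sets into a single coherent family $(\psi_i)$ realizing the limit relation. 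Once this lift is secured, the remaining steps — verifying well-definedness, continuity, and the factorization identity for $\tilde T_\psi$ — are direct computations, and injectivity of $\psi_0$ follows.
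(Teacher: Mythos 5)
Your proposal is correct and follows essentially the same route as the paper: both reduce injectivity to Proposition~\ref{appx} applied to $(E)_{\U}$ and $(F)_{\U}$, lift the relevant dual functional(s) to coordinatewise families in $(F')_{\U}$ (resp.\ $(E')_{\U}$) via Heinrich's theorem — which is where the $\aleph(E)^+$-good ultrafilter enters — and observe that the resulting evaluation factors through $\psi_0$. The only differences are presentational: the paper argues contrapositively by pairing a nonzero $T$ with $\mu\otimes\lambda$ for lifted $\mu$ and $\lambda$, whereas you argue directly with a single lifted functional and a slice operator $\tilde T_\psi$; note only that, exactly as in the paper, the coordinatewise representation of $\psi$ need only (and in general can only) be arranged on the closed span of the countably many $\eta_k$ actually involved, not on all of $(F)_{\U}$.
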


\begin{proof} Suppose that  $T\in ((E)_{\U}\hat{\otimes}
(F)_{\U},(P'_n\hat{\otimes} Q'_n))$ has a representation
 $T=\sum_{n=1}^\infty x_n\otimes y_n$ with $\sum_{n=1}^\infty P'(x_n)Q'(y_n)<\infty$.
 If $(E)_{\U}$ has the approximation property then, by \ref{appx}, if $T\in ((E)_{\U}\hat{\otimes}
(F)_{\U},(P'_n\hat{\otimes} Q'_n))$ is
non-zero, then there exist $\mu\in (E)_{\U}'$ and $\lambda\in (F)_{\U}'$ with
$$0\not=\langle \mu \otimes \lambda ,T \rangle=\sum_{n=1}^\infty \langle \mu,x_n\rangle \langle \lambda,y_n\rangle.$$
%%%%%%%%%%%%%%%%%%%%%%%%%%%%%%%%%%%%%%%%%%%%%%%%%%%%%%%%%%%%%%%%%%%%%%%%%%%%%%%%%%%%%%%%%%%%%%%%%%%%%%%%%%%%%%%%%%%%%%%%%%%%%%%%%%%%%%%
As we only care about the value of $\mu$ on the countable set
$\{x_n\}$, by \cite[Theorem 3.4]{Hen1} we may suppose that $\mu
\in (E')_{\U}$, and similarly, that $\lambda\in (F')_{\U}$, say
$\mu = (\mu_i)$ and $\lambda=(\lambda_i)$. Let representatives
$x_n = (x_n^{(i)})$ and $y_n = (y_n^{(i)})$. Then by absolute
convergence,
$$\langle \mu \otimes \lambda ,T \rangle=\lim_{\U}\sum_{n=1}^\infty \langle \mu_i,x_n^{(i)}
\rangle \langle \lambda_i,y_n^{(i)}\rangle=\langle (\mu_i \otimes
\lambda_i) ,\psi_0(T) \rangle.$$
%%%%%%%%%%%%%%%%%%%%%%%%%%%%%%%%%%%%%%%%%%%%%%%%%%%%%%%%%%%%%%%%%%%%%%%%%%%%%%%%%%%%%%%
Hence we must have $\psi_0(T)\not =0$ and it completes the proof.
\end{proof}
%%%%%%%%%%%%%%%%%%%%%%%%%%%%%%%%%%%%%%%%%%%%%%%%%%%%5
%%%%%%%%%%%%%%%%%%%%%%%%%%%%%%%%%%%%%%%%%%%%%%%%%%%%%%%%%%%%%%%%%%%%%%%%%%%%
%%%%%%%%%%%%%%%%%%%%%%%%%%%%%%%%%%%%%%%%%%%%%%%%%%%%%%%%%%%%%%%%%%%%%%%%
\begin{proposition}\label{closedimage}
If $(E)_{\U}$ with $\aleph(E)^+$-good ultrafilter has the approximation property, then $\psi_0$ has closed image.
\end{proposition}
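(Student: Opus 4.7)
I would prove the stronger statement that $\psi_0$ is isometric in each defining semi-norm, namely
\[
(P_n\hat\otimes Q_n)'\bigl(\psi_0(T)\bigr)=(P'_n\hat\otimes Q'_n)(T)
\]
for every $T\in(E)_{\U}\hat\otimes(F)_{\U}$ and every $n$. Closedness of the image then follows in one line: $(E)_{\U}\hat\otimes(F)_{\U}$ is complete as the projective tensor product of two Fr\'echet spaces, so its image under a semi-norm isometry inside the Fr\'echet space $(E\hat\otimes F)_{\U}$ is itself complete, hence closed.

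The inequality ``$\le$'' is routine and uses neither the approximation property nor the goodness of $\U$. Starting from any representation $T=\sum_k X_k\otimes Y_k$ with $X_k=(x_k^{(i)})_{\U}$ and $Y_k=(y_k^{(i)})_{\U}$, Proposition~\ref{2.2} gives
\[
(P_n\hat\otimes Q_n)'(\psi_0(T))=\lim_{\U}(P_n\hat\otimes Q_n)\Bigl(\sum_k x_k^{(i)}\otimes y_k^{(i)}\Bigr);
\]
bounding this summand by summand by $P_n(x_k^{(i)})Q_n(y_k^{(i)})$, pushing $\lim_{\U}$ inside the sum via its multiplicativity on bounded families of reals, and finally taking the infimum over representations yields ``$\le$''.

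The reverse inequality is the real content. Given a representative $\psi_0(T)=(T_i)_{\U}$ with $\lim_{\U}(P_n\hat\otimes Q_n)(T_i)<C$, I would discard an index set outside $\U$ so that the bound holds for every $i$, and then expand each $T_i$ in the bounded-null form of \cite[$\S 41\ 4(6)$]{kot} as $T_i=\sum_k\lambda_k^{(i)}\,a_k^{(i)}\otimes b_k^{(i)}$ with $\sum_k|\lambda_k^{(i)}|<C+\eps$ and uniformly controlled suprema $\sup_k P_n(a_k^{(i)})$, $\sup_k Q_n(b_k^{(i)})$. The $\aleph(E)^+$-goodness of $\U$ is then used to coherently refine the countably many $\U$-sets indexed by the summation variable $k$, producing elements $A_k=(a_k^{(i)})_{\U}\in(E)_{\U}$ and $B_k=(b_k^{(i)})_{\U}\in(F)_{\U}$ and scalars $\lambda_k=\lim_{\U}\lambda_k^{(i)}$ such that $T':=\sum_k\lambda_k A_k\otimes B_k$ converges absolutely in $(E)_{\U}\hat\otimes(F)_{\U}$ with $(P'_n\hat\otimes Q'_n)(T')\le C$. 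The approximation property enters at the final step through Proposition~\ref{inj}: a direct computation gives $\psi_0(T')=\psi_0(T)$ in $(E\hat\otimes F)_{\U}$, and injectivity of $\psi_0$ forces $T'=T$. I expect this saturation step---where uncountably many $\U$-sets (one per summation index $k$, per defining semi-norm of $E$ and $F$, and per control condition) must be intersected while preserving uniform bounds---to be the main obstacle, precisely the regime the $\aleph(E)^+$-good ultrafilter is designed to handle.
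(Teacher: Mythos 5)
Your ``$\le$'' direction is fine (it is essentially the first half of Proposition \ref{bb}), but the reverse inequality --- and hence the isometry on which your one-line closedness argument rests --- has a genuine gap at the step $\psi_0(T')=(T_i)_{\U}$. Writing $T_i=\sum_k\lambda_k^{(i)}a_k^{(i)}\otimes b_k^{(i)}$ and setting $T'=\sum_k\lambda_k A_k\otimes B_k$ with $A_k=(a_k^{(i)})_{\U}$ and $B_k=(b_k^{(i)})_{\U}$, you must show $\lim_{\U}(P_n\hat{\otimes}Q_n)\bigl(T_i-\sum_{k\le m}\lambda_k\,a_k^{(i)}\otimes b_k^{(i)}\bigr)\to 0$ as $m\to\infty$; the natural estimate for this is governed by $\lim_{\U}\sum_{k>m}|\lambda_k^{(i)}|$, and the tails $\sum_{k>m}|\lambda_k^{(i)}|$ are small for each fixed $i$ but not uniformly in $i$ (take $\lambda_k^{(i)}=\delta_{k,i}$: every tail has $\U$-limit $1$). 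Goodness of $\U$ cannot repair this; it is a genuine non-uniformity in the data, not a saturation problem. Indeed, if your construction worked for an arbitrary representative $(T_i)$ it would manufacture a $\psi_0$-preimage of an arbitrary element of $(E\hat{\otimes}F)_{\U}$, i.e.\ prove $\psi_0$ surjective, whereas Proposition \ref{4.3} shows the image consists precisely of those $(T_i)_{\U}$ admitting representations with bounds $P_n(x_k^{(i)})Q_n(y_k^{(i)})\le\alpha_{k,n}$ \emph{uniform in} $i$, and is in general proper. A second, independent objection is that isometry is stronger than the hypotheses allow: the approximation property (as opposed to a metric version) yields finite-rank approximants with uncontrolled constants, and in the Fr\'echet setting a single K\"othe representation of $T_i$ cannot be made near-optimal for all semi-norms $n$ simultaneously; accordingly the paper's Proposition \ref{bb} only obtains $P'_n\hat{\otimes}Q'_n(V)\le C_n\,(P_{n_0}\hat{\otimes}Q_{n_0})'(\psi_0(V))$, with a constant and a shift of index.

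The paper's own proof avoids resumming representatives entirely and argues by duality: for $u\ne 0$ it chooses an absolutely convex $0$-neighbourhood $V$ with $\|u\|_V\ne 0$ and a functional $T\in V^\circ$ with $\langle u,T\rangle$ close to $\|u\|_V$, uses the approximation property of $(E)_{\U}$ to replace $T$ by a finite-rank $S=\sum_{j=1}^N\mu_j\otimes\lambda_j$ satisfying $\langle u,S\rangle\ge\|u\|_V/2$, and then invokes Heinrich's theorem (this is where the good ultrafilter enters) to realize the $\mu_j,\lambda_j$ as ultrapowers of functionals, so that $\langle u,S\rangle=\langle(S_i),\psi_0(u)\rangle$. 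This bounds $u$ from below in terms of $\psi_0(u)$, whence a Cauchy sequence $\{\psi_0(u_m)\}$ pulls back to a Cauchy sequence $\{u_m\}$ in the complete space $(E)_{\U}\hat{\otimes}(F)_{\U}$, and the image is closed. If you wish to keep your outline, replace the isometry claim by this weaker ``bounded below'' statement and prove it by the duality argument rather than by reassembling series representations of the $T_i$.
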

\begin{proof}

%%%%%%%%%%%%%%%%
%%%%%%%%%%%%%%%%%%%%%%%%%%%%%%%%%%%%%%%%%%%
%%%%%%%%%%%%%%%%%%%%%%%%%%%%%%%%%%%%%%%%%%%%%%%%%%%%%%%%%%%%%%%%%%
%%%%%%%%%%%%%%%%%%%%%%%%%%%%%%%%%%%%%%%%%%%%%%%%%%%%%%%%%%%%%%%%%%%%%%%%%%%%%%%%
Let $0\not = u\in (E)_{\U}\hat{\otimes} (F)_{\U}$ and suppose
 $\sum_{n=1}^\infty \alpha_n(x_n\otimes
y_n)$ is a representation of $u$ with $x_n\rightarrow 0$,
$y_n\rightarrow 0$ and $\sum_{n=1}^{\infty}|\alpha_n|\leq 1$, see
\cite[\S 41\ 4(6)]{kot}.
%%%%%%%%%%%%%%%%%%%%%%%%%%%%%%%%%%%%%%%%%
Let $k=\sum_{n=1}^{\infty}|\alpha_n|$, since by \cite[Proposition
22.14] {Vog} for absolutely convex zero neighborhood $V$ of
$(E)_{\U}\hat{\otimes} (F)_{\U}$ we have
$||u||_V=\displaystyle{\sup_{T\in V^\circ}}|\langle u,T
\rangle|$, without loss of generality we can assume
$0\not=||u||_V$ (otherwise we can find a $V$ such that
$0\not=||u||_V$) so for $\epsilon=\frac{||u||_V}{2(k+1)}$ there
exists a $T\in V^\circ$ such that
$\frac{||u||_V(2k+1)}{2(k+1)}=||u||_V-\frac{||u||_V}{2(k+1)}\leq
\langle u , T \rangle$.
%%%%%%%%%%%%%%%%%%%%%%%%%%%%%%%%%%%%%%%%%%%%%%%%%%%%%
%%%%%%%%%%%%%%%%%%%%%%%%%%%%%%%%%%%%%%%%%%%%%%%%%%%%%%%%%%%%%%%%%%%%%%%%%%%%%%%%%%%%%%%%%%%%%%
Now because ${\mathcal
L}_b((E)_{\U},((F)_{\U})'_b)=((E)_{\U}\hat{\otimes}(F)_{\U})'$, by
\cite[Proposition 2]{Bir} we can assume $T\in {\mathcal
L}_b((E)_{\U},((F)_{\U})'_b)$ and since $(E)_{\U}$ has
approximation property there exists a finite rank operator
$S:(E)_{\U}\longrightarrow (F)_{\U}'$ such that
$R_M(Tx-Sx)<\frac{||u||_V}{2(k+1)}$ for $x\in\{x_n\}\cup\{0\}$
and $M=\{y_n\}$ where $R_M$ is as in Proposition \ref{appx}.
%%%%%%%%%%%%%%%%%%%%%%%%%%%%%%%%%%%%%%%%%%%%%%%%%%%%%
%%%%%%%%%%%%%%%%%%%%%%%%%%%%%%%%%%%%%%%%%%%%%%%%%%%%%%%%%%%%%%%%%%%%%%%%%%%%%%%%%%%%%%%%%%%%%%
So we have
\begin{eqnarray*}
\langle u , T-S \rangle &\leq& \sum _{n=1}^\infty
|((T-S)\alpha_nx_n)y_n|\\
 & \leq & \sum_{n=1}^\infty |\alpha_n|R_M((T-S)x_n)\\
& \leq & \epsilon \sum_{n=1}^\infty |\alpha_n|.
 \end{eqnarray*}
%%%%%%%%%%%%%%%%%%%%%%%%%%%%%%%%%%%%%%%%%%%%%%%%%%%%%%%%%%%%%%%%%%%%%%%%%%%%%%%%%%%%%%%%%%%%%%%%%%%%%%%%%%%%%%%%%%%%%%%%%%%%%%%
and it follows that:
\begin{eqnarray*}
\langle u, S \rangle &\geq &\langle u, T \rangle-\epsilon \sum_{n=1}^\infty |\alpha_n| \\
&\geq &
\frac{(2k+1)||u||_V}{2(k+1)}-\frac{k||u||_V}{2(k+1)}=\frac{||u||_V}{2}.
\end{eqnarray*}
Let $\sum_{j=1}^N\mu_j\otimes \lambda_j$ representation of $S$
with $(\mu_j)\subset (E)_{\U}'$ and $(\lambda_j)\subset (F)_{\U}'$
so that $\langle u, S \rangle =\sum_{n=1}^\infty \sum_{j=1}^N
\langle \mu_j , \alpha_nx_n \rangle \langle \lambda_j , y_n
\rangle $. By consideration of closed span $\{x_n\}$ and
\cite[Theorem 3.4]{Hen1}, we can assume that $\mu_j\in (E')_{\U}$
and $\lambda_j\in (F')_{\U}$ for each j.
%%%%%%%%%%%%%%%%%%%%%%%%%%%%%%%%%%%%%%%%%%%%%%%%%%%%%%%%%%%%%%%%%%%%%%%%%%%%%%%%%%%%%%%%%%5
Now let representations  $\mu_j=(\mu_j^i)$,
$\lambda_j=(\lambda_j^i)$ and for each $i$ let $S_i=\sum_{j=1}^N
\mu_j^i\otimes \lambda_j^i$.
%%%%%%%%%%%%%%%%%%%%%%%%%%%%%%%%%%%%%%%%%%%%%%%%%%%%%%%%%%%%%%%%%%%%%%%%%%%%%%%%%%%%%
By a calculation we have $\langle u, S \rangle = \langle (S_i) ,
\psi_0 (u) \rangle$. So:
$$|\langle (S_i) , \psi_0 (u)|\geq \frac{||u||_V}{2}.$$
This implies that if $\{\psi_0(u_n) \}$ is cauchy sequence then
$\{u_n\}$ is cauchy sequence and since
$(E)_{\U}\hat{\otimes}(F)_{\U}$ is complete it converges to an
element $u$ in  $(E)_{\U}\hat{\otimes}(F)_{\U}$. Then $\lim_n
\psi_0(u_n)=\psi(u)$ and hence this shows that the image of
$\psi_0$ is closed.
\end{proof}
%%%%%%%%%%%%%%%%
%%%%%%%%%%%%%%%%%%%%%%%%%%%%%%%%%%%%%%%%%%%
%%%%%%%%%%%%%%%%%%%%%%%%%%%%%%%%%%%%%%%%%%%%%%%%%%%%%%%%%%%%%%%%%%
%%%%%%%%%%%%%%%%%%%%%%%%%%%%%%%%%%%%%%%%%%%%%%%%%%%%%%%%%%%%%%%%%%%%%%%%%%%%%%%%
\begin{proposition}\label{bb}
Let $E$, $F$ be Fr\'echet spaces and $(E)_{\U}$ with
$\aleph(E)^+$-good ultrafilter has approximation property. Then
$$\psi_0:((E)_{\U}\hat{\otimes} (F)_{\U},(P'_n\hat{\otimes}
Q'_n))\rightarrow ((E\hat{\otimes}
F)_{\U},((P_n\hat{\otimes}Q_n)'))$$ is continuous and bounded
below. Moreover, there exists $(P_{n_0}\hat{\otimes}Q_{n_0})'$
such that for all $V\in (E)_{\U}\hat{\otimes}(F)_{\U}$ and $n\in
\Bbb{N}$, $P'_n\hat{\otimes}Q'_n(V)\leq C_n
(P_{n_0}\hat{\otimes}Q_{n_0})'(\psi_0(V))$ and
$(P_n\hat{\otimes}Q_n)'(\psi_0(V))\leq P'_n\hat{\otimes}Q'_n(V)$,
for some $C_n>0$.
\end{proposition}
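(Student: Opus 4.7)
The plan is to prove the two inequalities separately. The upper bound (continuity) follows by a direct computation from the definitions using Proposition \ref{2.2}, while the lower bound (bounded below) drops out of Propositions \ref{inj} and \ref{closedimage} via the open mapping theorem for Fr\'echet spaces.

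For the upper bound, I would fix $V\in(E)_{\U}\hat{\otimes}(F)_{\U}$ and $\eps>0$, and choose a projective tensor representation $V=\sum_{k}u_{k}\otimes v_{k}$ with $\sum_{k}P'_{n}(u_{k})Q'_{n}(v_{k})<P'_{n}\hat{\otimes}Q'_{n}(V)+\eps$. Writing $u_{k}=(x_{k}^{(i)})_{\U}$ and $v_{k}=(y_{k}^{(i)})_{\U}$ with representatives whose $P_{n}$- and $Q_{n}$-seminorms are uniformly bounded, one has $\psi_{0}(V)=\sum_{k}(x_{k}^{(i)}\otimes y_{k}^{(i)})_{\U}$, the series converging in $(E\hat{\otimes}F)_{\U}$. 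Using Proposition \ref{2.2}, subadditivity of $(P_{n}\hat{\otimes}Q_{n})'$, and multiplicativity of $\lim_{\U}$ on bounded real nets,
\begin{equation*}
(P_{n}\hat{\otimes}Q_{n})'(\psi_{0}(V))\leq\sum_{k}\lim_{\U}P_{n}\hat{\otimes}Q_{n}(x_{k}^{(i)}\otimes y_{k}^{(i)})\leq\sum_{k}\lim_{\U}P_{n}(x_{k}^{(i)})\,Q_{n}(y_{k}^{(i)})=\sum_{k}P'_{n}(u_{k})Q'_{n}(v_{k}),
\end{equation*}
which is less than $P'_{n}\hat{\otimes}Q'_{n}(V)+\eps$; letting $\eps\to 0$ delivers the upper bound and hence continuity.

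For the lower bound I would invoke the two preceding propositions: by Proposition \ref{inj} the map $\psi_{0}$ is injective, and by Proposition \ref{closedimage} its image is closed in $(E\hat{\otimes}F)_{\U}$. Both source and image are Fr\'echet (the projective tensor product of two Fr\'echet spaces is Fr\'echet, and a closed subspace of a Fr\'echet space is Fr\'echet in the subspace topology), so the continuous linear bijection $\psi_{0}:(E)_{\U}\hat{\otimes}(F)_{\U}\to\psi_{0}\bigl((E)_{\U}\hat{\otimes}(F)_{\U}\bigr)$ is a topological isomorphism by the Banach--Schauder open mapping theorem. Translating continuity of the inverse into the language of the increasing defining families $(P'_{n}\hat{\otimes}Q'_{n})$ and $((P_{n}\hat{\otimes}Q_{n})')$ yields exactly the required inequality: for each $n$ there exist $n_{0}\in\nm$ and $C_{n}>0$ with $P'_{n}\hat{\otimes}Q'_{n}(V)\leq C_{n}(P_{n_{0}}\hat{\otimes}Q_{n_{0}})'(\psi_{0}(V))$ for every $V$.

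The main obstacle is not in the present proposition itself, which is essentially a bookkeeping corollary of Propositions \ref{inj} and \ref{closedimage} combined with the open mapping theorem; the genuine analytic content (use of the approximation property and $\aleph(E)^{+}$-goodness of $\U$) has already been absorbed into those two results. The only mild technicalities here are choosing representatives so the relevant real-valued nets are uniformly bounded, thereby justifying multiplicativity of $\lim_{\U}$, and verifying that $\psi_{0}$ commutes with the termwise sum of an absolutely convergent series in the projective tensor product -- both of which are routine.
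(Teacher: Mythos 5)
Your proposal is correct and follows essentially the same route as the paper: the upper bound is obtained by a direct seminorm computation on tensor representations using Proposition \ref{2.2}, and the lower bound by combining injectivity (Proposition \ref{inj}) and closed range (Proposition \ref{closedimage}) with the open mapping theorem to get continuity of $\psi_0^{-1}$, then translating that into the seminorm inequality. The only cosmetic difference is that you work with infinite representations in the completed tensor product from the start, whereas the paper establishes the inequality on finite (algebraic) tensors and passes to the completion by continuity of the seminorms.
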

%%%%%%%%%%%%%%%%
%%%%%%%%%%%%   3.5       %%%%%%%%%%%%%%%%%%%%%%%%%%%%%%%
%%%%%%%%%%%%%%%%%%%%%%%%%%%%%%%%%%%%%%%%%%%%%%%%%%%%%%%%%%%%%%%%%%
%%%%%%
\begin{proof} Let $V\in (E)_{\U}\otimes (F)_{\U}$ and $n\in \Bbb{N}$. Then

\begin{eqnarray*}
%\begin{array}{ll}
P'_n\otimes Q'_n(V)&=&\inf\{\sum_{j=1}^k P'_n(x^j)Q'_n(y^j):V= \sum_{j=1}^k (x_i^j)_{\U}\otimes (y_i^j)_{\U}\}\\
&=&\inf\{\sum_{j=1}^k \lim_{{\U}}P_n(x_i^j)\lim_{{\U}}Q_n(y_i^j):V=\sum_{j=1}^k (x_i^j)_U\otimes (y_i^j)_{\U}\}\\
&=&\inf\{\sum_{j=1}^k \lim_{\U}P_n(x_i^j)Q_n(y_i^j):V=\sum_{j=1}^k (x_i^j)_{\U}\otimes (y_i^j)_{\U}\}\\
&=&\inf\{\lim_{\U}\sum_{j=1}^k P_n(x_i^j)Q_n(y_i^j):V=\sum_{j=1}^k (x_i^j)_{\U}\otimes (y_i^j)_{\U}\}\\
&=&\inf\{\lim_{\U}\sum_{j=1}^k P_n(x_i^j)Q_n(y_i^j):V'=\sum_{j=1}^k (x_i^j\otimes y_i^j)_{\U}\}\\
&=&\inf\{\lim_{\U}\sum_{j=1}^k
P_n(x_i^j)Q_n(y_i^j):V'=(\sum_{j=1}^k x_i^j\otimes y_i^j)_{\U}\}
%\end{array}
\end{eqnarray*}

Let $V'_i=\sum_{j=1}^k x_i^j\otimes y_i^j$. Then  $\sum_{j=1}^k
P_n(x_i^j)Q_n(y_i^j)\geq P_n\otimes Q_n(V'_i)$, and hence
$$\lim_{\U}\sum_{j=1}^k P_n(x_i^j)Q_n(y_i^j)\geq\lim_{\U} P_n\otimes Q_n(V'_i)=(P_n\otimes Q_n)'(V'),$$
so $P'_n\otimes Q'_n(V)\geq (P_n\otimes Q_n)'(V')$. Since the
semi norms $P'_n\otimes Q'_n$ and $(P_n\otimes Q_n)'$ are
continuous on $(E)_{\U}\otimes (F)_{\U}$ and $(E\otimes F)_{\U}$
respectively, then
$$(P_n\hat{\otimes} Q_n)'(V')\leq P'_n\hat{\otimes}
Q'_n(V), \quad\quad V\in ((E)_{\U}\hat{\otimes} (F)_{\U}), V'\in
(E\hat{\otimes} F)_{\U}.$$
%%%%%%%%%%%%%%%%%%%%%%%%%%%%%%%%%%%%%%%%%%%%%%%%%%%%%%%%%%%%%%%%%%%%%%%%%%
Suppose that $V'=\psi_0(V)$, from the last inequality we have,
$$(P_n\otimes Q_n)'(\psi_0(V))\leq P'_n\otimes Q'_n(V),$$
so this shows that $\psi_0$ is continuous.
%%%%%%%%%%%%%%%%%%%%%%%%%%%%%%%%%%%%%%%%%%%%%%%%%%%%%%%%%%%%%%%%%%%%%%%%%%%%%%%%%%%%%%%%%%%%
Since $\psi_0$ by the proposition \ref{closedimage} has closed
image and by the proposition \ref{inj} is injective, from the
\cite[Corollary 2.12]{Rud} we have $\psi_0^{-1}$ is continuous.

Moreover for every  $P'_n\hat{\otimes} Q'_n$ by \cite[Theorem
1.1,\S 3]{Shf} there exists a constant $C_n>0$ and
$(P_{n_0}\hat{\otimes} Q_{n_0})'$ such that
 $P'_n\hat{\otimes}Q'_n(\psi_0^{-1}(V))\leq C_n (P_{n_0}\hat{\otimes}Q_{n_0})'(V)$ and
$P'_n\hat{\otimes}Q'_n(V)\leq C_n
(P_{n_0}\hat{\otimes}Q_{n_0})'(\psi_0(V)$. So this completes the
proof.
\end{proof}
%%%%%%%%%%%%%%%%
%%%%%%%%%%%%%%%%%%%%%%%%%%%%%%%%%%%%%%%%%%%
%%%%%%%%%%%%%%%%%%%%%%%%%%%%%%%%%%%%%%%%%%%%%%%%%%%%%%%%%%%%%%%%%%
%%%%%%%%%%%%%%%%%%%%%%%%%%%%%%%%%%%%%%%%%%%%%%%%%%%%%%%%%%%%%%%%%%%%%%%%%%%%%%%%
\begin{proposition}\label{4.3}
Let $(E,(P_n))$, and $(F,(Q_n))$ be Fr\'echet spaces, and $\U$ be
an ultrafilter on an index set $I$ and let $T\in
(E\hat{\otimes}F)_{\U}$. Then the following statements are
equivalent:
%%%%%%%%%%%%%%%%%%%%%%%%%%%%%%%%%%%%%%%%%%%%%%%%%%%%%%%%%%%%%%%%%%%%%%%%%%%%%%%%%%%%%%%%%%%%%%%%%%%%%%%%%%%%%%%%%%%%%
\begin{enumerate}
  \item[{ (i)}] There exist a sequence $\alpha_{k,n}$ of positive real numbers
  such that
$\sum_{k,n}\alpha_{k,n}<\infty$ and  $T=(T_i)$ admits a
representation of the form

$$T_i=\sum_{k=1}^{\infty}x_k^{(i)}\otimes y_k^{(i)}\in E\hat{\otimes}F,\quad\quad i\in I,$$

where for each $i, k$ we have $\p(x_k^{(i)})Q_n(y_k^{(i)})\leq
\alpha_{k,n}.$
  \item[{ (ii)}]  $T$ lies in the image of $\psi_0$.
  \end{enumerate}
%%%%%%%%%%%%%%%%%%%%%%%%%%%%%%%%%%%%%%%%%%%%%%%%%%%%%%%%%%%%%%%%%%%%%%%%%%%%%%%
\end{proposition}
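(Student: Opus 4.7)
The plan is to prove the two directions separately, with the forward direction being essentially constructive and the reverse one relying on the standard Köthe representation for elements of a projective tensor product of Fréchet spaces.

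For (i) $\Rightarrow$ (ii), the natural candidate for a preimage is $V := \sum_{k=1}^{\infty}(x_k^{(i)})_{\U}\otimes (y_k^{(i)})_{\U}\in (E)_{\U}\hat{\otimes}(F)_{\U}$. The first technical step is to ensure that the sequences $(x_k^{(i)})_{i}$ and $(y_k^{(i)})_{i}$ individually live in $\ell_\infty(I,E)$ and $\ell_\infty(I,F)$, since the hypothesis controls only their product $P_n(x_k^{(i)})Q_n(y_k^{(i)})\leq \alpha_{k,n}$. I would perform a per-pair rescaling $x_k^{(i)}\mapsto \mu_{k,i} x_k^{(i)}$, $y_k^{(i)}\mapsto \mu_{k,i}^{-1}y_k^{(i)}$ (which preserves each elementary tensor), choosing $\mu_{k,i}>0$ — via a standard diagonal balance that exploits the increasing families $(P_n)$ and $(Q_n)$ — so that $\sup_i P_n(\mu_{k,i}x_k^{(i)})\leq \beta_{k,n}$ and $\sup_i Q_n(\mu_{k,i}^{-1}y_k^{(i)})\leq \gamma_{k,n}$ with $\sum_{k}\beta_{k,n}\gamma_{k,n}<\infty$ for every $n$. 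Then Proposition~\ref{2.2} gives $P'_n\bigl((\mu_{k,i}x_k^{(i)})_{\U}\bigr)\leq \beta_{k,n}$ and similarly for $Q'_n$, so the series defining $V$ converges absolutely in every semi-norm $P'_n\hat{\otimes}Q'_n$. Continuity of $\psi_0$ (Proposition~\ref{bb}) and the elementary identity $\psi_0(u\otimes v)=(x_i\otimes y_i)_{\U}$ on simple tensors yield $\psi_0(V)=\bigl(\sum_k x_k^{(i)}\otimes y_k^{(i)}\bigr)_{\U}=(T_i)_{\U}=T$, using that each $T_i=\sum_k x_k^{(i)}\otimes y_k^{(i)}$ converges in $E\hat{\otimes}F$ by the summability of $\alpha_{k,n}$.

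For (ii) $\Rightarrow$ (i), suppose $T=\psi_0(W)$ for some $W\in (E)_{\U}\hat{\otimes}(F)_{\U}$. By the representation theorem for elements of the projective tensor product of Fréchet spaces (Köthe, \S 41\ 4(6), as used already in Propositions~\ref{appx} and \ref{closedimage}), I can write $W=\sum_{k=1}^{\infty} u_k\otimes v_k$ with $\sum_{k} P'_n(u_k)Q'_n(v_k)<\infty$ for every $n$. Pick representatives $u_k=(x_k^{(i)})_{\U}$ and $v_k=(y_k^{(i)})_{\U}$. Using Proposition~\ref{2.2}, $\lim_{\U}P_n(x_k^{(i)})Q_n(y_k^{(i)})=P'_n(u_k)Q'_n(v_k)$; by a diagonal argument over $k$ and $n$, one may modify the representatives on a $\U$-small set (hence without changing $u_k,v_k$) so that for \emph{all} $i\in I$ one obtains $P_n(x_k^{(i)})Q_n(y_k^{(i)})\leq \alpha_{k,n}$ with $\sum_{k,n}\alpha_{k,n}<\infty$ (e.g.\ slightly inflate each $\lim$-value by a summable factor $2^{-n-k}$). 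Defining $T'_i:=\sum_k x_k^{(i)}\otimes y_k^{(i)}$, the bound ensures convergence in $E\hat{\otimes}F$, and $\psi_0(W)=(T'_i)_{\U}=T$ shows the representation of the form (i) holds.

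The main obstacle in both directions is the simultaneous control over the countable family of semi-norms: in (i)$\Rightarrow$(ii) the rescaling $\mu_{k,i}$ must balance $x_k^{(i)}$ and $y_k^{(i)}$ for \emph{every} $n$ at once while keeping uniform control in $i$, and in (ii)$\Rightarrow$(i) the passage from ultrafilter limits $\lim_{\U}P_n(x_k^{(i)})Q_n(y_k^{(i)})$ to a bound $\alpha_{k,n}$ valid for all $i$ requires a diagonal intersection of countably many sets in $\U$ indexed by $(k,n)$ (which is compatible with countable incompleteness of $\U$ but still has to be done carefully).
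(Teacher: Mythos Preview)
Your approach is essentially the same as the paper's: in (i)$\Rightarrow$(ii) you rescale, form $\sum_k (x_k^{(i)})_{\U}\otimes(y_k^{(i)})_{\U}$ in $(E)_{\U}\hat{\otimes}(F)_{\U}$, and push it through $\psi_0$; in (ii)$\Rightarrow$(i) you take a K\"othe-type representation of the preimage, choose representatives, and read off the $\alpha_{k,n}$. The paper proceeds identically, only it does the (i)$\Rightarrow$(ii) step by a direct estimate $\lim_{m}(P_n\hat{\otimes}Q_n)'(\psi_0(\sigma_m)-T)\leq\lim_m\sum_{k>m}\alpha_{k,n}=0$ rather than invoking continuity of $\psi_0$ as a black box. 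This matters slightly because your citation of Proposition~\ref{bb} carries an approximation-property hypothesis on $(E)_{\U}$ that is not assumed in the present statement; the continuity half of that proposition is in fact hypothesis-free, but it is cleaner (and matches the paper) to bypass it with the direct computation. Otherwise your proposal is correct, and in fact you are more explicit than the paper about the two genuine technical points---the per-pair rescaling needed to place $(x_k^{(i)})_i$ and $(y_k^{(i)})_i$ individually in the $\ell_\infty$-spaces, and the diagonal adjustment of representatives over the countable family $(k,n)$---both of which the paper's proof asserts rather than carries out.
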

\begin{proof}
%Similarly to proof of proposition 4.1 of \cite {Daw}.\\
Suppose that (i) holds. By rescaling, we may suppose that
$\p(x_k^{(i)})=Q_n(y_k^{(i)})\leq \alpha_{k,n}^{1/2}$ for each
$i\in I$ and $k\geq 1$. For each $k\geq1$, let
$$x_k=(x_k^{(i)})\in (E)_{\U},\ \ y_k=(y_k^{(i)})\in (F)_{\U},$$
with $P'_n(x_k)\leq \alpha_{k,n}^{1/2}$ and $Q'_n(y_k)\leq
\alpha_{k,n}^{1/2}$. Now let
$$\sigma=\sum_{k=1}^{\infty}x_k\otimes y_k\in (E)_{\U}\hat{\otimes}(F)_{\U},$$
%with $\pi(\sigma)\leq \sum_k\alpha_k$
and  $\sigma_m=\sum_{k=1}^{\infty}x_k\otimes y_k$, so
$\sigma_m\rightarrow \sigma \in(E)_{\U}\hat{\otimes}(F)_{\U}$.
Then
\begin{eqnarray*}
  \lim_{m\to \infty}(P_n\hat{\otimes}Q_n)'(\psi_0(\sigma_m)-T) & = &\lim_{m\to \infty}\lim_{\U}(P_n\hat{\otimes}Q_n)(\sum_{k=1}^{m}x_k^{(i)}\otimes y_k^{(i)}-T_i) \\
   & \leq &\lim_{m\to \infty}\lim_{\U}\sum_{k=m+1}^{\infty}P_n(x_k^{(i)})Q_n(y_k^{(i)}) \\
   & \leq &\lim_{m\to \infty}\sum_{k=m+1}^{\infty}\alpha_{k,n}=0,
\end{eqnarray*}
so this shows that $\psi_0(\sigma)=T$, as required. Conversely,
suppose that $\psi_0(\sigma)=T$ for

$$\sigma=\sum_{k=1}^{\infty}x_k\otimes y_k\in (E)_{\U}\hat{\otimes}(F)_{\U},$$
with $\sum_{k=1}^{\infty}P'_n(x_k)Q'_n(y_k)<\infty$. And let
$\alpha_{k,n}=P'_n(x_k)Q'_n(y_k)$ and  pick representatives
$x_k=(x_k^{(i)})\in (E)_{\U}\ \ and \ \ y_k=(y_k^{(i)})\in
(F)_{\U},$ such that $P'_n(x_k)=P(x_k^i)$ and $Q'_n(x_k)=Q(x_k^i)$
for each $i\in I$. If we assume
$$T_i=\sum_{k=1}^{\infty}x_k^{(i)}\otimes y_k^{(i)},\quad \sigma_m=\sum_{k=1}^m x_k\otimes y_k,$$

then  $T=\lim_{m\to \infty}\psi_0(\sigma_m)$. For each $m\in
\mathbb{N}$, we have
\begin{eqnarray*}
(P_n\hat{\otimes}Q_n)'((T_i-\psi_0(\sigma_m))&=&\lim_{\U}(P_n\hat{\otimes}Q_n)(T_i-\sum_{k=1}^{m}x_k^{(i)}\otimes y_k^{(i)})\\
&\leq &\lim_{\U}\sum_{k=m+1}^{\infty}\p(x_k^{(i)})\p(y_k^{(i)})\\
&=&\sum_{k=m+1}^{\infty}\alpha_{k,n}
\end{eqnarray*}

and so taking limits on both sides of the above equation as
$m\rightarrow \infty$, we see that $(T_i)_{\U}=T$, as required to
complete the proof.
\end{proof}

%\begin{lemma}\label{4.3.2} Let $\A$ be a Fr\'echet algebra then if $({\A})_{\U}$ has bounded approximation property then $\psi_0$ is bounded below.
%\end{lemma}
%%%%%%%%%%%%%%%%%%%%%%%%%%%%%%%%%%%%%
%%%%%%%%%%%%%%%%%%%%%%%%%%%%%%%%%%%%%%%%%%%%%%%%%%
%%%%%%%%%%%%%%%%%%%%%%%%%%%%%%%%%%%%%%%%%%%%%%%%%%%%%%%%%%%%%%%%%%%%%%%%%%%
%%%%%%%%%%%%%%%%%%%%%%%%%%%%%%%%%%%%%%%%%%%%%%%%%%%%%%%%%%%%%%%%%%%%%%%%%%%%%%%%%%%%%%%%%%%%%%%%
\section{Contractibility of  ultrapower of Fr\'echet algebras}
If $\A$ is a Fr\'echetf algebra, then the corresponding diagonal
operator is defined through,
$\Delta_{\A}:{\A}\hat{\otimes}{\A}\rightarrow {\A}$ by
$\Delta_{\A}(a\otimes b)=ab$ for all $a,b\in {\A}$. By definition
a diagonal for $\A$ is an element $T\in {\A}\hat{\otimes}{\A}$
such that $\Delta_{\A}(T)a=a$ and $a\cdot T=T\cdot a$ $(a\in\A)$,
for more details see \cite{Run}. We now introduce a 'locally'
version of diagonal for Fr\'echet algebras.

\begin{definition}\label{4.1}
Let $(\A,(\p))$ be a Fr\'echet algebra. We say that $\A$ has a locally
diagonal if for each $n\in \Bbb{N}$, there exists  $T_n\in
{\A}\hat{\otimes}{\A}$ such that $P_n(\Delta_{\A}(T_n)a-a)=0$ and
$P_n(a\cdot T_n-T_n\cdot a)=0$ for all $a\in\A$.
\end{definition}

\begin{definition}\label{4.1.1}
Let $\A$ be Fr\'echet algebra, we call $\A$ contractible if it has diagonal.
%amenable if for all
%Fr\'echet $\A$-module $X$ every continuous
% derivation from $\A$ into $\A$-bimodule $X^*$ is inner, where the dual of $X$ equipped with strong topology.
\end{definition}

\begin{theorem}\label{4.4}
Let $(\A,(\p))$ be a Fr\'echet algebra and $\U$ be an
$\aleph({\A})^+$-good ultrafilter on an index set $I$, such that
$({\A})_{\U}$ has approximation property. Then $({\A})_{\U}$ has
(locally) diagonal if and only if for  a constant
  $C>0$, there exist a sequence of positive reals
    $(\alpha_{j,n})_{n,j\in \Bbb N}$ with $\displaystyle{\sum_{j,n}\alpha_{j,n}}\leq C$,
    such that for each $i\in I$ there exists sequences $(a^i_{j})_{j\in \Bbb N}$ and $(b^i_{j})_{j\in \Bbb
    N}$ in $\A$ with following condition:
\begin{itemize}
  \item $T_i=\displaystyle{\sum_{j=1}^\infty b_j^i\otimes a_j^i\in{\A}\hat{\otimes}{\A}}$,
  \item $\lim_{\U}\ph(a_i\cdot T_i-T_i\cdot a_i)=0$, \quad\quad for all $a=(a_i)\in ({\A})_{\U}$
  \item $\lim_{\U}\p(\Delta_{\A}(T_i)a_i-a_i)=0$, \quad\quad for all $a=(a_i)\in ({\A})_{\U}$
  \item $\p(b_j^i)\p(a_j^i)\leq \alpha_{j,n}$, \quad\quad for all $n,j\in
  \Bbb{N}$.
\end{itemize}
\end{theorem}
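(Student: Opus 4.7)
The plan is to read the statement as the translation, through the canonical bimodule homomorphism $\psi_0:(\A)_\U\hat{\otimes}(\A)_\U\to(\A\hat{\otimes}\A)_\U$, of the defining identities of a diagonal for $(\A)_\U$. The hypotheses are tailored so that the earlier propositions all apply: $\psi_0$ is injective by Proposition \ref{inj}, has closed image by Proposition \ref{closedimage}, and its image is pinned down precisely by Proposition \ref{4.3} to consist of those elements of $(\A\hat{\otimes}\A)_\U$ admitting a representation of the required form. A secondary observation I would use throughout is that $\psi_0$ intertwines the diagonal operators in the sense that $\Delta_{(\A)_\U}=\Delta_\U\circ\psi_0$, where $\Delta_\U:(\A\hat{\otimes}\A)_\U\to(\A)_\U$ is induced by $\Delta_\A$ via $(T_i)_\U\mapsto(\Delta_\A(T_i))_\U$; this identity is immediate on elementary tensors and extends by continuity.

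For the forward direction, I would start with a diagonal $\sigma\in(\A)_\U\hat{\otimes}(\A)_\U$ and consider $T:=\psi_0(\sigma)\in(\A\hat{\otimes}\A)_\U$. Since $T$ lies in the image of $\psi_0$, the (ii)$\Rightarrow$(i) part of Proposition \ref{4.3} furnishes a representation $T=(T_i)_\U$ with $T_i=\sum_j b_j^i\otimes a_j^i$, $\p(b_j^i)\p(a_j^i)\leq\alpha_{j,n}$, and $\sum_{j,n}\alpha_{j,n}<\infty$, which I take to be $C$. The bimodule identity $a\cdot\sigma=\sigma\cdot a$, transported through $\psi_0$ and then unpacked via Proposition \ref{2.2}, reads $\lim_\U\ph(a_i\cdot T_i-T_i\cdot a_i)=0$; the identity $\Delta_{(\A)_\U}(\sigma)a=a$, once $\Delta_{(\A)_\U}=\Delta_\U\circ\psi_0$ is applied, reads $\lim_\U\p(\Delta_\A(T_i)a_i-a_i)=0$. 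Conversely, given the data of the bullets, set $T:=(T_i)_\U$; the (i)$\Rightarrow$(ii) part of Proposition \ref{4.3} yields $\sigma\in(\A)_\U\hat{\otimes}(\A)_\U$ with $\psi_0(\sigma)=T$. The first limit condition gives $\psi_0(a\cdot\sigma-\sigma\cdot a)=0$, so injectivity of $\psi_0$ forces $a\cdot\sigma=\sigma\cdot a$; the second condition analogously gives $\Delta_{(\A)_\U}(\sigma)a=a$, exhibiting $\sigma$ as a diagonal. For the parenthetical \emph{locally} version, the same transfer is applied semi-norm by semi-norm, replacing equality in $(\A)_\U$ by vanishing in the $n$-th semi-norm on both sides of the correspondence.

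The only genuine work, and the step I expect to be the main obstacle, is the bookkeeping around the intertwining identity $\Delta_{(\A)_\U}=\Delta_\U\circ\psi_0$ together with the compatibility of $\psi_0$ with the left and right $(\A)_\U$-actions: one must check carefully that $a\cdot(T_i)_\U=(a_i\cdot T_i)_\U$ in $(\A\hat{\otimes}\A)_\U$ matches $a\cdot\psi_0(\sigma)=\psi_0(a\cdot\sigma)$ transported from $(\A)_\U\hat{\otimes}(\A)_\U$, and symmetrically on the right. Once this routine but notation-heavy compatibility is verified on elementary tensors and extended by the continuity of $\psi_0$ supplied by Proposition \ref{bb}, the entire theorem collapses to a direct application of Proposition \ref{4.3}, and the four bullets in the statement become simply a restatement of what it means for the preimage $\sigma$ under $\psi_0$ to be a diagonal for $(\A)_\U$.
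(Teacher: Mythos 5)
Your proposal is correct and follows essentially the same route as the paper: both directions are reduced to Proposition \ref{4.3} via the intertwining map $\psi_0$, using its injectivity (Proposition \ref{inj}) and the commutation $\Delta_{(\A)_{\U}}=(\Delta_{\A})_{\U}\circ\psi_0$ for the forward direction, and the surjectivity-onto-its-image characterization plus the bounded-below estimate of Proposition \ref{bb} for the converse. The only cosmetic difference is that the paper phrases the converse entirely through the semi-norm inequality $M_n(P'_n\hat{\otimes}P'_n)(\cdot)\leq(P_{n_0}\hat{\otimes}P_{n_0})'(\psi_0(\cdot))$ (which is indeed what is needed for the ``locally'' variant, where injectivity alone would not suffice), exactly as you indicate in your closing remarks.
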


\begin{proof} Let $\{T_n\}$ be a (locally) diagonal in $({\A})_{\U}$. By definition of $\psi_0$ the
following diagram commutes.

\begin{center}

\begin{picture}(0,0)
\setlength{\unitlength}{1in}
\put(-.5,0){{${(\A)}_{\U}\hat{\otimes}({\A})_{\U}$}}
\put(.3,-.02){\vector(1,0){0.55}}
\put(.38,.06){$\Delta_{({\A})_{\U}}$} \put(1,0){${(\A)}_{\U}$}
\put(0,-.1){\vector(0,-1){.55}} \put(0.05,-.4){$\psi_0$}
\put(-.3,-.8){$({\A}\hat{\otimes}{\A})_{\U}$}
\put(.2,-.65){\vector(4,3){.75}}
\put(.48,-.55){$(\Delta_{\A})_{\U}$}
\end{picture}

\end{center}\vspace{1in}
Since by the proposition \ref{inj} $\psi_0$ is injective, then
$\psi_0(T_n)=(T_i^n)_{i\in I}$ is not zero. So for all
$a=(a_i)\in(\A)_{\U}$ we have

% $(T_i)_{i\in I}$ is not zero so for all
%$a\in{(A)}_{\U}$ :
\begin{eqnarray*}
  \lim_{\U}\hat{P}_n(a_i\cdot T_i^n-T_i^n\cdot a_i) &=& (P_{n}\hat{\otimes}P_{n})'(a\cdot (T_i^n)_{i\in I}-(T_i^n)_{i\in
I}\cdot a)\\
&=&(P_{n}\hat{\otimes}P_{n})'(a\cdot \psi_0(T_n)-\psi_0(T_n)\cdot
a)\\&=&(P_{n}\hat{\otimes}P_{n})'(\psi_0(a\cdot T_n-T_n\cdot a))=0,
\end{eqnarray*}

with same method we have
$$\lim_{\U}\p(\Delta_{\A}(T_i^n)a_i-a_i)=0.$$
From the proposition \ref{4.3}, the above condition holds.

For the reverse, since ${(\A)}_{\U}$ has approximation property,
then $\psi_0$ is bounded below and  the proposition \ref{bb}
implies that,
  for all $n\in \Bbb{N}$, there is constant $M_n>0$ and $n_0\in \Bbb{N}$, such that
  $M_n(P'_{n}\hat{\otimes}P'_{n})(a)\leq (P_{n_0}\hat{\otimes}P_{n_0})'(\psi_0(a))$ for all
   $a\in {(A)}_{\U}\hat{\otimes}{(\A)}_{\U}$. Now for all $i\in I$, let $T_n=(T_i^n)$ in the above
   condition so
    by proposition \ref{4.3} $T_n$ lies in the
image of $\psi_0$. And hence there is
     $S_n\in {(A)}_{\U}\hat{\otimes}{(\A)}_{\U}$ such that $\psi_0(S_n)=T_n$.

     Now for $a\in {(A)}_{\U}\hat{\otimes}{(\A)}_{\U}$ and $n \in {\Bbb N}$ we have:

\begin{enumerate}
  \item[({i})]     $(P'_{n}\hat{\otimes}P'_{n})(a\cdot S_n-S_n\cdot
  a)=0$,   because
  % hence $a\cdot S=S\cdot  a$,
\begin{eqnarray*}
  (P'_{n}\hat{\otimes}P'_{n})(a\cdot S_n-S_n\cdot a)& \leq &\frac{1}{M_n}(P_{n_0}\hat{\otimes}P_{n_0})'(\psi_0(a\cdot S_n-S_n\cdot a))
  \\
  &=&\frac{1}{M_n}(P_{n_0}\hat{\otimes}P_{n_0})'(a\cdot T_n-T_n\cdot a)=0
  \end{eqnarray*}

  \item[({ii})]  $P_n(\Delta_{{(\A)}_{\U}}(S_n) a-a)=0$, because
   \begin{eqnarray*}
  (P'_{n}\hat{\otimes}P'_{n})(\Delta_{{(\A)}_{\U}}(S_n) a-a)&=&(P'_{n}\hat{\otimes}P'_{n})((\Delta_{\A})_{\U}(\psi_0(S_n))a-a)\\
  & =&(P'_{n}\hat{\otimes}P'_{n})((\Delta_{\A})_{\U}(T_n)a-a)=0.
    \end{eqnarray*}
\end{enumerate}
 %%%%%%%%%%%%%%%%%%%%%%%%%%%%%%%%%%%%%%%%%%%%%%%%%%%%%%      %%%%%%%%%%%%%%
%%%%%%%%%%%%%%%%%%%%%%%%%%%%%%%%%%%%%%%%%%%%%%%%%%%%%%%%%%%%%%%%%%%%%%%%%%%%%%%%
This shows that $\{S_n\}$ is a (locally) diagonal in ${(\A)}_{\U}$.
\end{proof}
\begin{corollary}\label{4.5}
Let $\A$ be a Fr\'echet algebra with $\aleph({\A})^+$-good ultrafilter such that has approximation property then
${(\A)_{\U}}$ has (locally) diagonal if and only if $\A$ has (locally) diagonal.
\end{corollary}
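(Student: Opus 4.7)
The corollary is essentially a direct application of Theorem~\ref{4.4}, whose condition characterizing (locally) diagonals in $({\A})_{\U}$ I will show is equivalent to $\A$ itself possessing a (locally) diagonal. The plan is to verify the two implications separately.

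For the direction \emph{$\A$ has (locally) diagonal $\Rightarrow$ $({\A})_{\U}$ has (locally) diagonal}, let $T\in\A\hat{\otimes}\A$ be a (locally) diagonal of $\A$. Write $T=\sum_{j=1}^{\infty} b_j\otimes a_j$, and use the flexibility of Fr\'echet projective tensor product representations (see \cite[\S 41\ 4(6)]{kot}) to rescale so that $\alpha_{j,n}:=\p(b_j)\p(a_j)$ satisfies $\sum_{j,n}\alpha_{j,n}\leq C$ for some $C>0$ (for instance by absorbing a factor of $2^{-j-n}$ into the representation of $T$). Setting $T_i:=T$ and $a_j^i:=a_j,\ b_j^i:=b_j$ for every $i\in I$ produces a constant family. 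The identities $a\cdot T-T\cdot a=0$ and $\Delta_{\A}(T)a-a=0$, valid pointwise in $\A$, immediately yield the two ultrafilter-limit conditions of Theorem~\ref{4.4} for any $(a_i)\in({\A})_{\U}$. The theorem then gives that $({\A})_{\U}$ has a (locally) diagonal.

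For the reverse direction \emph{$({\A})_{\U}$ has (locally) diagonal $\Rightarrow$ $\A$ has (locally) diagonal}, Theorem~\ref{4.4} supplies a family $\{T_i\}_{i\in I}\subseteq \A\hat{\otimes}\A$ with $T_i=\sum_j b_j^i\otimes a_j^i$, uniform bounds $\p(b_j^i)\p(a_j^i)\leq \alpha_{j,n}$, and the two vanishing limits along $\U$. Specializing the limit conditions to constant families $a_i=a$ for $a\in\A$ gives $\lim_{\U}\ph(a\cdot T_i-T_i\cdot a)=0$ and $\lim_{\U}\p(\Delta_{\A}(T_i)a-a)=0$ for every $a\in\A$ and every $n$. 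I would then use the uniform summability $\sum_{j,n}\alpha_{j,n}\leq C$ together with the completeness of $\A\hat{\otimes}\A$ to construct, for each $n$, an element $T_{(n)}\in\A\hat{\otimes}\A$ realizing the $P_n$-identities of Definition~\ref{4.1} exactly: inductively pick indices $i\in I$ at which the deviations from the diagonal relations are small on a dense countable subset of $\A$ in the $P_n$-semi-norm, and extract a limit in $\A\hat{\otimes}\A$ using the uniform representation bound (which keeps the candidate diagonals inside a bounded set controlled by the $\alpha_{j,n}$'s).

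The main obstacle is this reverse direction: the conditions produced by Theorem~\ref{4.4} are only asymptotic along $\U$, and one must cross back from an ultrapower-level family of approximate diagonals to an exact (locally) diagonal in $\A$. The crucial input is the uniform bound $\sum_{j,n}\alpha_{j,n}\leq C$, which prevents the candidate diagonals from escaping to infinity, together with Proposition~\ref{2.2}, which identifies $\lim_{\U}(\p\hat{\otimes}\p)(\cdot)$ with $(\p\hat{\otimes}\p)'(\cdot)$ and so lets ultrafilter limits of semi-norm values be read as semi-norm values in the quotient space $(\A\hat{\otimes}\A)_{\U}$. Once a $T_{(n)}$ is extracted, verifying $\p(\Delta_{\A}(T_{(n)})a-a)=0$ and $\p(a\cdot T_{(n)}-T_{(n)}\cdot a)=0$ for all $a\in\A$ reduces to a density and continuity argument in the $P_n$-semi-norm.
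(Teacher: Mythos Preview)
The paper disposes of this corollary in a single sentence: ``Since every ultrafilter is equal to a net (see \cite{kot}) it follows from [the] above theorem.'' Your forward direction---taking the constant family $T_i:=T$ built from a (locally) diagonal $T$ of $\A$ and feeding it into the characterisation of Theorem~\ref{4.4}---is precisely what that sentence amounts to in one direction, and is correct once one checks that a fixed $T\in\A\hat{\otimes}\A$ admits a representation with $\sum_{j,n}\alpha_{j,n}<\infty$; your $2^{-j-n}$ rescaling together with the K\"othe representation \cite[\S 41\ 4(6)]{kot} handles this.

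Where you diverge from the paper is the reverse implication. The paper offers no extraction argument at all: it simply invokes the ultrafilter--net correspondence and declares the result to follow from Theorem~\ref{4.4}. You, by contrast, correctly isolate the real obstacle---Theorem~\ref{4.4} hands you only a bounded family $(T_i)\subseteq\A\hat{\otimes}\A$ satisfying the diagonal relations \emph{asymptotically along $\U$}, and one must manufacture a single $T_{(n)}\in\A\hat{\otimes}\A$ satisfying the $P_n$-identities \emph{exactly}---but your proposed mechanism does not close the gap. Inductively selecting indices on which the deviations are small on a countable dense set, and then ``extracting a limit in $\A\hat{\otimes}\A$ using the uniform representation bound'', requires a compactness property that the bound $\sum_{j,n}\alpha_{j,n}\leq C$ does not supply: bounded sets in a non-normable Fr\'echet space are not relatively compact, so there is no reason your selected $T_i$'s converge anywhere in $\A\hat{\otimes}\A$. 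Even granting convergence, the limiting object would a priori satisfy the $P_n$-identities only approximately on your dense set, not exactly on all of $\A$. The paper's one-line proof glosses over exactly the same issue (a bounded net along $\U$ need not have a limit in $\A\hat{\otimes}\A$); you have made the difficulty visible rather than resolved it.
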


\begin{proof} Since every ultrafilter is equal to a net
(see \cite {kot}) it follows from above theorem.
\end{proof}

\begin{theorem}\label{4.9}
Let $\A$ be a Fr\'echet algebra with approximation property and let $\U$ be a $\aleph({\A})^+$-good ultrafilter.
${(\A)_{\U}}$ is contractible if and only if $\A$ is contractible.
\end{theorem}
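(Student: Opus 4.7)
The plan is to deduce Theorem \ref{4.9} directly from Corollary \ref{4.5} by unwinding Definition \ref{4.1.1}. A Fr\'echet algebra $\B$ is by definition contractible exactly when it admits a diagonal, i.e.\ an element $T\in\B\hat{\otimes}\B$ satisfying $\Delta_\B(T)a=a$ and $a\cdot T=T\cdot a$ for every $a\in\B$. Applying this reformulation to both $\A$ and $(\A)_\U$ reduces the theorem to the equivalence
$$\A \text{ has a diagonal} \iff (\A)_\U \text{ has a diagonal}.$$

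This equivalence is exactly the non-local reading of Corollary \ref{4.5} (the parenthetical ``(locally)'' covers both the local and the ordinary versions uniformly). The chain $\A$ contractible $\Leftrightarrow$ $\A$ has a diagonal $\Leftrightarrow$ $(\A)_\U$ has a diagonal $\Leftrightarrow$ $(\A)_\U$ contractible then closes, and the theorem follows. In other words, modulo the single invocation of the earlier corollary, the proof is a three-line unwinding of definitions.

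The only step that requires extra care, and which I expect to be the main obstacle, is the mild discrepancy in the approximation property hypothesis. Theorem \ref{4.9} supposes that $\A$ itself has the approximation property, whereas the reduction via Theorem \ref{4.4} and Corollary \ref{4.5} is formulated in terms of $(\A)_\U$ having the approximation property. To bridge this, I would insert a transfer lemma asserting that for an $\aleph(\A)^+$-good countably incomplete ultrafilter $\U$, the approximation property passes from $\A$ to $(\A)_\U$. The strategy is standard: given a continuous operator $T\colon(\A)_\U\to(\A)_\U$ and a precompact $K\subset (\A)_\U$, choose coordinate representatives so that $K$ is approximated by coordinate-wise precompact sets in $\A$, apply the approximation property of $\A$ at each coordinate to obtain finite-rank approximants with controlled error, and assemble these into a finite-rank operator on $(\A)_\U$, using the $\aleph(\A)^+$-goodness of $\U$ to secure an additive index selection on which all the approximations are simultaneously effective. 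Once this transfer lemma is in place, Corollary \ref{4.5} applies verbatim and Theorem \ref{4.9} follows immediately.
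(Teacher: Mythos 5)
Your main reduction --- ``contractible'' means ``has a diagonal'' by Definition \ref{4.1.1}, so the theorem is just the non-local case of Corollary \ref{4.5} --- is exactly the argument the paper intends; the paper in fact supplies no separate proof of Theorem \ref{4.9}, treating it as an immediate consequence of Corollary \ref{4.5}. So the first half of your proposal coincides with the paper's route.

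The problem is the ``transfer lemma'' you insert to reconcile the hypotheses, and this is a genuine gap rather than a formality. You are right that there is a mismatch (the abstract and Theorem \ref{4.4} place the approximation property on $(\A)_{\U}$, while Theorem \ref{4.9} places it on $\A$; this is an inconsistency in the paper itself). But the statement you propose --- that the approximation property passes from $\A$ to $(\A)_{\U}$ for a good countably incomplete ultrafilter --- is not a standard fact and fails in general already for Banach spaces: by Heinrich's work on ultraproducts, $(E)_{\U}$ has the approximation property for every countably incomplete $\U$ exactly when $E$ has the \emph{uniform} approximation property, which is strictly stronger than the approximation property. Your own sketch shows where the argument breaks: the coordinatewise finite-rank approximants $S_i$ obtained from the approximation property of $\A$ have ranks depending on $i$ with no uniform bound, and an ultraproduct $(S_i)_{\U}$ of operators of unbounded rank is not a finite-rank operator on $(\A)_{\U}$; goodness of $\U$ can coordinate the index sets but cannot cap the ranks. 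Moreover, a general continuous operator on $(\A)_{\U}$ need not be of the form $(T_i)_{\U}$, so the coordinatewise reduction is not available to begin with. The clean repair is to read Theorem \ref{4.9} with the hypothesis that $(\A)_{\U}$ has the approximation property (as in the abstract and Theorem \ref{4.4}), after which your three-line unwinding of Corollary \ref{4.5} is complete; if one insists on a hypothesis on $\A$ alone, it must be a uniform approximation property, not the one you assume.
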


\begin{example}
Let $A=(a_{i,j})_{i,j\in {\Bbb{N}}}$ be a k\"{o}the matrix which is
non-negative real matrix that satisfies the following condition:
\begin{enumerate}
  \item for each $i\in {\Bbb{N}}$ there exists a $j\in {\Bbb{N}}$ with
  $a_{i,j}>0$
  \item $a_{i,j}\leq a_{i,j+1}$ for all $i,j\in {\Bbb{N}}$.
\end{enumerate}
For each k\"{o}the matrix we define:
$$\lambda^1(A)=\{x=(x_i)\in {\Bbb{C}}^{\Bbb{N}}: P_k(x)=\sum_{i=1}^\infty |x_i|a_{i,k}<\infty, \quad k\in {\Bbb{N}}\}.$$
$\lambda^1(A)$ is Fr\'echet space with fundamental family of
semi-norms $(P_k)_{k\in {\Bbb{N}}}$. This space with additional
condition $a_{i,j}\in \{0\}\cup [1,{\infty})$ for all $i,j\in
{\Bbb{N}}$ and pointwise multiplication is Fr\'echet algebra.

In the above description, we take $a_{i,j}=1$ for all $i\leq j$ and zero for
other. It is easy to show that
$\lambda^1(A)={\Bbb{C}}^{\Bbb{N}}$ with following semi-norms
$$P_k(x)=\sum_{i=1}^\infty |x_i|\quad\quad x\in {\Bbb{C}}^{\Bbb{N}}.$$

Since ${\Bbb{C}^{\Bbb{N}}}= \displaystyle{\lim_{\longleftarrow}}
{{\Bbb{C}}^{n}}$ is reduced projective limit (see \cite{Shf}) and
$({\Bbb{C}}^n)_{\U}={\Bbb{C}}^n$, for all $n\in {\Bbb{N}}$, so with a good ultrafilter by
\cite[Corollary 2.3]{Hen1} we have
$$({\Bbb{C}^{\Bbb{N}}})_{\U}=
\displaystyle{\lim_{\longleftarrow}}
({{\Bbb{C}}^{n}})_{\U}=\displaystyle{\lim_{\longleftarrow}}
{{\Bbb{C}}^{n}}={\Bbb{C}}^{\Bbb{N}}.$$
Now by \cite[Corollary 2,chapter
III] {Shf} ${\Bbb{C}}^{\Bbb{N}}$ has approximation property and
since  ${\Bbb{C}}^{\Bbb{N}}$ is  contractible, then its ultrapower $({\Bbb{C}^{\Bbb{N}}})_{\U}$ is contractible.
\end{example}

%%%%%%%%%%%%%%%%%%%%%%%%%%%%%
%%%%%%%%%%%%%%%%%%%%%%%%%%%%%%%%%%%%%%%%%%%%%%%%%%%%%%%%%%%%%%%%%%%
%%%%%%%%%%%%%%%%%%%%%%%%%%%%%%%%%%%%%%%%%%%%%%%%%%%%%%%%%%%%%%%%%%%%%%%%%%%%%%%%%%%%%%%%%%%%%%

\section{Acknowledgments}\label{5}
The authors would like to thank  to the help of the prof. B. Sims
for sending us his interesting book ```` Ultra"-techniques in
Banach space theory''.
%%%%%%%%%%%%%%%%%%%%%%%%%%%%%%%%%%%%%%%%%%%%%%%%%%%%%%%%%%%%%%55
%%%%%%%%%%%%%%%%%%%%%%%%%%%%%%%%%%55
%%%%%%%%%%%%%%%%%%%%%%%%%%%%%%%%%%%%%%%%%%%%%%%%%%%%%%%%%%%%%%%%%%%%%%%%%%%%%%%%%%%%%%%%%%%

%%%%%%%%%%%%%%%%%%%%%%%%%%%%%%%%%%%%%%%%%%%%%%%%%%%%%%%%%%%%%%%%%%%%%%%%%%%%%%%%%%%%%%%%%%%%%%%%%%%%%%%%%%%

%\newcommand{\J}[4]{{ #1} {\bf #2} (#3) #4}
%\newcommand{\CMP}{Comm.\ Math.\ Phys.}

\end{document}